\documentclass[12pt]{amsart}


\usepackage{amssymb,latexsym}
\usepackage{enumerate}

\makeatletter
\@namedef{subjclassname@2010}{%
  \textup{2010} Mathematics Subject Classification}
\makeatother

\newtheorem{thm}{Theorem}[section]
\newtheorem{prop}[thm]{Proposition}
\newtheorem{cor}[thm]{Corollary}
\newtheorem{lem}[thm]{Lemma}

\theoremstyle{definition}

\newtheorem{rem}[thm]{Remark}

\addtolength{\hoffset}{-1cm}
\addtolength{\textwidth}{2cm}
\addtolength{\voffset}{-1cm}
\addtolength{\textheight}{2cm}

\newcommand{\Sph}{\mathbb S}
\newcommand{\R}{\mathbb{R}}

\DeclareMathOperator{\dist}{dist}

\begin{document}

\title
{Fractional Hardy-Sobolev-Maz'ya inequality for domains}

\author[B.~Dyda]{Bart{\l}omiej~Dyda}
\address{
Bart{\l}omiej~Dyda\\
Faculty of Mathematics\\ University of Bielefeld\\
Postfach 10 01 31,
D-33501 Bielefeld, Germany
\and
 Institute of Mathematics and Computer Science\\ Wroc{\l}aw University of Technology\\
Wybrze\.ze Wyspia\'nskiego 27,
50-370 Wroc{\l}aw, Poland}

\author[R.~L.~Frank]{Rupert~L.~Frank}
\address{Rupert~L.~Frank\\
Department of Mathematics,
Princeton University, Princeton, NJ 08544, USA}

\subjclass[2010]{Primary 26D10; Secondary 46E35, 31C25}

\keywords{fractional Hardy-Sobolev-Maz'ya inequality,
  fractional Hardy inequality}

\sloppy \footnotetext{
Work supported by the DFG through SFB-701 'Spectral Structures and Topological Methods in Mathematics'
and by grant N N201 397137, MNiSW (B.D.) and by U.S. NSF grant PHY–1068285 (R.L.F.)
}

\begin{abstract}
We prove a fractional version of the Hardy--Sobolev--Maz'ya inequality for arbitrary domains and $L^p$ norms with $p\geq 2$. This inequality combines the fractional Sobolev and the fractional Hardy inequality into a single inequality, while keeping the sharp constant in the Hardy inequality.
\end{abstract}

\maketitle

\section{Introduction}
We are concerned here with the fractional Hardy inequality in an arbitrary
domain $\Omega\subsetneq \R^N$, which states that if $1<p<\infty$ and $0<s<1$ with $ps>1$, then
\begin{align}\label{eq:hardy}
\iint_{\Omega\times\Omega} \frac{|u(x)-u(y)|^p}{|x-y|^{N+ps} } \,dx\,dy
\geq \mathcal D_{N,p,s} \int_{\Omega} \frac{|u(x)|^p}{m_{ps}(x)^{ps}}\,dx
\end{align}
for all $u\in$ \textit{\r{W}}$^s_p(\Omega)$, the closure of $C_c^\infty(\Omega)$ with respect to the left side of \eqref{eq:hardy}.
The pseudodistance $m_{ps}(x)$ is defined in (\ref{eq:malpha}); its most important property for the present discussion is that for \emph{convex} domains $\Omega$ we have $m_{ps}(x) \leq \dist(x, \Omega^c)$. We denote by $\mathcal D_{N,p,s}$ the \emph{sharp} constant in~\eqref{eq:hardy}, which was recently found by Loss and Sloane \cite{LossSloane} and is explicitly given in \eqref{eq:hardyconst} below. This constant is independent of $\Omega$ and coincides with that on the halfspace which was earlier found in \cite{KBBD-bc,FrankSeiringer}.

By the (well-known) Sobolev inequality the left side of \eqref{eq:hardy} dominates an $L_q$-norm of $u$. Our main result, the fractional HSM inequality, states that the left side of \eqref{eq:hardy}, even after subtracting the right side, is still strong enough to dominate this $L_q$-norm. More precisely, we shall prove 

\begin{thm}\label{thm:HSM}
 Let $N\geq 2$, $2\leq p<\infty$ and $0<s<1$ with $1<ps<N$. Then there is a constant $\sigma_{N,p,s}>0$ such that
\begin{align}\label{eq:main}
\iint_{\Omega\times\Omega} \frac{|u(x)-u(y)|^p}{|x-y|^{N+ps} } \,dx\,dy
 - \mathcal D_{N,p,s} \int_{\Omega} \frac{|u(x)|^p}{m_{ps}(x)^{ps}}\,dx 
\geq \sigma_{N,p,s} \, \left( \int_{\Omega} |u(x)|^q \,dx \right)^{p/q}
\end{align}
for all open $\Omega\subsetneq\R^N$ and all $u\in$ \textit{\r{W}\,}$^s_p(\Omega)$, where $q=Np/(N-ps)$.
\end{thm}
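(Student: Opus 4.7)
My strategy is to prove the stronger statement that, for every open $\Omega\subsetneq\R^N$, the Hardy deficit dominates a constant multiple of the full fractional seminorm on $\R^N$ of the zero extension $\tilde u$ of $u$:
\begin{equation*}
\iint_{\Omega\times\Omega}\frac{|u(x)-u(y)|^p}{|x-y|^{N+ps}}\,dx\,dy - \mathcal D_{N,p,s}\int_{\Omega}\frac{|u(x)|^p}{m_{ps}(x)^{ps}}\,dx \;\geq\; c\iint_{\R^N\times\R^N}\frac{|\tilde u(x)-\tilde u(y)|^p}{|x-y|^{N+ps}}\,dx\,dy.
\end{equation*}
With this in hand, \eqref{eq:main} follows immediately from the fractional Sobolev embedding on $\R^N$ applied to $\tilde u$, since $\|\tilde u\|_{L^q(\R^N)}=\|u\|_{L^q(\Omega)}$.

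\textbf{Halfspace step.} I would first prove the displayed inequality for $\Omega=\R^N_+$, where $m_{ps}(x)=x_N$. The Frank--Seiringer ground-state substitution $u(x)=x_N^{(ps-1)/p}v(x)$ transforms the Hardy deficit into a manifestly non-negative weighted double integral in $v$ with an explicit symmetric kernel. For $p\geq 2$ a convexity inequality (Clarkson-type, or the elementary $|a|^p-|b|^p\geq p|b|^{p-2}b(a-b)+c_p|a-b|^p$) then splits this integral into two non-negative pieces, one of which can be identified, after undoing the substitution, as a positive multiple of the plain $\R^N\times\R^N$ fractional seminorm of $\tilde u$. The role of the condition $p\geq 2$ is precisely to make such a quantitative splitting available; for $p<2$ the kernel is no longer controlled from below by a constant times itself plus a flat remainder.

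\textbf{From halfspaces to arbitrary $\Omega$; main obstacle.} The Loss--Sloane proof of \eqref{eq:hardy} represents both the Hardy weight $\int_\Omega|u|^p/m_{ps}^{ps}\,dx$ and a suitable portion of the double integral $\iint_{\Omega\times\Omega}|u(x)-u(y)|^p/|x-y|^{N+ps}\,dx\,dy$ as angular averages, parametrized by $\omega\in\Sph^{N-1}$, of the analogous quantities attached to halfspaces with inner normal $\omega$; this is the device that turns the sharp halfspace Hardy constant into the sharp constant on arbitrary $\Omega$. I would apply the same angular decomposition to the Hardy deficit on $\Omega$, invoke the halfspace estimate of the previous step on each halfspace, and integrate over $\omega$. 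The principal obstacle is to ensure that the remainder in the halfspace step is reassembled correctly under this averaging: the target $\R^N\times\R^N$ fractional seminorm of $\tilde u$ must survive as a single translation-invariant object rather than fragmenting into a family of halfspace seminorms indexed by $\omega$. Because the kernel $|x-y|^{-N-ps}$ of the remainder is exactly the one driving the Loss--Sloane slicing, Fubini ought to reconstruct the full seminorm, but this forces a careful, $\omega$-uniform choice of the extraction constant $c$ in the halfspace step and a matching bookkeeping with the Loss--Sloane angular weight.
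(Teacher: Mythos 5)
Your plan hinges on the ``stronger statement'' that the Hardy deficit dominates a positive multiple of the full $\R^N\times\R^N$ fractional seminorm of the \emph{zero} extension $\tilde u$. That statement is false, already for $\Omega=\R^N_+$. Indeed, for any $u\in C_c^\infty(\R^N_+)$ one has the exact decomposition
\begin{align*}
\iint_{\R^N\times\R^N}\frac{|\tilde u(x)-\tilde u(y)|^p}{|x-y|^{N+ps}}\,dx\,dy
 = \iint_{\R^N_+\times\R^N_+}\frac{|u(x)-u(y)|^p}{|x-y|^{N+ps}}\,dx\,dy
 + 2\int_{\R^N_+}|u(x)|^p\,V(x)\,dx,
\end{align*}
where $V(x)=\int_{\{y_N<0\}}|x-y|^{-N-ps}\,dy = C_{N,ps}\,x_N^{-ps}$ with $C_{N,ps}>0$. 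Plugging this into your proposed inequality and rearranging (with $0<c<1$, and using $m_{ps}(x)=x_N$ on the half-space) yields
\begin{align*}
\iint_{\R^N_+\times\R^N_+}\frac{|u(x)-u(y)|^p}{|x-y|^{N+ps}}\,dx\,dy
 \;\geq\; \frac{\mathcal D_{N,p,s}+2cC_{N,ps}}{1-c}\int_{\R^N_+}\frac{|u(x)|^p}{x_N^{ps}}\,dx,
\end{align*}
a Hardy inequality on the half-space with constant strictly larger than $\mathcal D_{N,p,s}$. This contradicts the sharpness of $\mathcal D_{N,p,s}$ (Bogdan--Dyda, Frank--Seiringer, Loss--Sloane). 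So no amount of Clarkson-type convexity or careful angular bookkeeping in the later steps can rescue the outline: the target of your half-space step is not attainable, and lifting it via Loss--Sloane slicing would lift a false inequality.

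The paper's proof takes a genuinely different route precisely to avoid this obstruction. In the half-space case (Section~\ref{sec:proof2}) the ground-state substitution is used, as you suggest, but the resulting weighted quantity $J[v]$ is not compared to the $\R^N$ seminorm of $\tilde u$; instead one slices by the level sets of $x_N$, applies a Sobolev inequality \emph{on each slab} proved via the \emph{even} (not zero) extension, and uses Minkowski to reassemble a weighted $L^q$ bound on $v$, which converts directly to $\|u\|_q^p$. For general $\Omega$ (Section~\ref{sec:proof3}), the argument is of a completely different nature: a one-dimensional pointwise estimate (Lemma~\ref{lem:dim1}, proved via the Frank--Seiringer remainder term on an interval plus the Garsia--Rodemich--Rumsey oscillation lemma) controls $\|f\|_\infty$ by the Hardy deficit and $\|f\|_q$; a Gagliardo--Nirenberg--type product argument in the $N$ coordinate directions, followed by averaging with the Loss--Sloane slicing formula, then produces the $L^q$ bound with the pseudodistance $m_{ps}$. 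The crucial structural point is that the theorem is obtained without ever claiming the Hardy deficit dominates a fractional seminorm of $u$ or $\tilde u$ -- it dominates only a much weaker Sobolev-type norm, which is exactly what sharpness of the Hardy constant permits.
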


Inequality \eqref{eq:main} has been conjectured in \cite{FrankSeiringer} in analogy to the local HSM inequalities \cite{M,BFT}.
Recently, Sloane \cite{Sloane} found a remarkable proof of \eqref{eq:main} for $p=2$ and $\Omega$ being a half-space. Our result generalizes this to any $p\geq 2$ and any $\Omega$. We emphasize that our constant $\sigma_{N,p,s}$ can be chosen independently of $\Omega$. Therefore Theorem \ref{thm:HSM} is the fractional analog of the main inequality of \cite{FLHSM}, which treats the local case.

We know explain the notation in \eqref{eq:main}. The sharp constant \cite{LossSloane} in \eqref{eq:hardy} is
\begin{equation}\label{eq:hardyconst}
\mathcal D_{N,p,s} = 2 \pi^{\frac{N-1}2} \frac{\Gamma(\frac{1+ps}2)}{\Gamma(\frac{N+ps}2)}
\int_0^1 \left( 1 - r^{(ps-1)/p} \right)^p \frac{dr}{(1-r)^{1+ps}} \,.
\end{equation}
In the special case $p=2$ we have
\[
 \mathcal D_{N,2,s} = 
2\pi^{\frac{N-1}{2}} \frac{\Gamma(\frac{1+2s}{2})}{\Gamma(\frac{N+2s}{2})}
\frac{B\left(\frac{1+2s}{2}, 1-s \right)
  -2^{2s}}{ 2^{2s+1}s}  = 2\kappa_{N,2s},
\]
where $\kappa_{N,2s}$ is the notation used in \cite{KBBD-bc, LossSloane, DydaHint}. We denote
\begin{align}
d_{\omega}(x) &= \inf\{ |t| : x+t\omega \not\in \Omega \}, \quad x\in \R^N,\, \omega \in \Sph^{N-1},
\end{align}
where $\Sph^{N-1}=\{x\in \R^N: |x|=1\}$ is the $(N-1)$-dimensional unit sphere. Following \cite{LossSloane} we set for $\alpha>0$
\begin{align}\label{eq:malpha}
 m_\alpha(x)
&= \left( \frac{2\pi^{\frac{N-1}{2}} \Gamma(\frac{1+\alpha}{2}) }{\Gamma(\frac{N+\alpha}{2}) } \right)^{\frac{1}{\alpha}}
 \left(\int_{\Sph^{N-1}} \frac{d\omega}{d_\omega(x)^\alpha} \right)^{-\frac{1}{\alpha}}, 
\end{align}
which is analogous to the pseudodistance $m(x)$ of Davies \cite[Theorem~5.3.5]{Davies}.
We recall that for convex domains $\Omega$, we have $m_\alpha(x) \leq d(x)$,
see \cite{LossSloane}.

This paper is organized as follows. In the next three sections we present three independent proofs of \eqref{eq:main}, but only the last one in full generality. In Section~\ref{sec:proof2}, we use the ground state representation for half-spaces as the starting point. This allows us to obtain \eqref{eq:main} for half-spaces and any $p\geq 2$. In Section~\ref{sec:proof1} we derive a~fractional Hardy inequality (\ref{Hball}) for balls  with two additional terms, and then deduce \eqref{eq:main} in case when $p=2$ and $\Omega$ is a ball or a half-space. In the last section, we extend the method developed in \cite{FLHSM} and use results from \cite{GRR} and \cite{LossSloane} to prove Theorem~\ref{thm:HSM} for arbitrary domains.

\subsection*{Acknowledgment}
The authors would like to thank M. Loss and C. Sloane for useful discussions.


\section{The inequality on a halfspace}\label{sec:proof2}

In this section, we prove Theorem~\ref{thm:HSM} in the particular case when $\Omega=\R^N_+=\{x\in\R^N:\ x_N>0\}$. Our starting point is the inequality
\begin{equation}
\label{eq:gsr}
\iint_{\R^N_+\times\R^N_+} \frac{|u(x)-u(y)|^p}{|x-y|^{N+ps} } \,dx\,dy
 - \mathcal D_{N,p,s} \int_{\R^N_+} \frac{|u(x)|^p}{x_N^{ps}}\,dx 
\geq c_p J[v] \,,
\end{equation}
where $c_p$ is an explicit, positive constant (for $p=2$ this is an identity with $c_2=1$),
$$
J[v] := \iint_{\R^N_+\times\R^N_+} \frac{|v(x)-v(y)|^p}{|x-y|^{N+ps} } (x_N y_N)^{(ps-1)/2} \,dx\,dy \,,
$$
and $v(x):=x_N^{-(ps-1)/p} u(x)$. This inequality was derived in \cite{FrankSeiringer}, using the `ground state representation' method from \cite{FrSe1}. We note that $m_{ps}(x)=x_N$ in the case of a halfspace, as a quick computation shows (see also \cite[(7)]{LossSloane}).

In order to derive a lower bound on $J[v]$ we make use of the bound
\begin{equation*}
(x_N y_N)^a \geq \min\{x_N^{2a},y_N^{2a}\} = 2a \int_0^\infty \chi_{(t,\infty)}(x_N) \chi_{(t,\infty)}(y_N) t^{2a-1} \,dt
\end{equation*}
for $a>0$. Combining this inequality with the fractional Sobolev inequality (see Lemma \ref{sob} below) and Minkowski's inequality, we can bound
\begin{align*}
 J[v] & \geq (ps-1) \int_0^\infty \iint_{\{x_N>t,\,y_N>t\}} \frac{|v(x)-v(y)|^p}{|x-y|^{N+ps} } \,dx\,dy \ t^{ps-2}\,dt \\
& \geq (ps-1) \mathcal C_{N,p,s} \int_0^\infty \left( \int_{\{x_N>t\}} |v(x)|^q \,dx \right)^{p/q} \,t^{ps-2}\,dt \\
& \geq (ps-1) \mathcal C_{N,p,s} \left( \int_{\R^N_+} |v(x)|^q \left( \int_0^{x_N} t^{ps-2}\,dt \right)^{q/p} dx \right)^{p/q} \\
& = \mathcal C_{N,p,s} \left( \int_{\R^N_+} |v(x)|^q \, x_N^{q(ps-1)/p} \,dx \right)^{p/q} \,.
\end{align*}
Recalling the relation between $u$ and $v$ we arrive at \eqref{eq:main}. This completes the proof of Theorem \ref{thm:HSM} when $\Omega=\R^N_+$.
\qed

\medskip

In the previous proof we used the Sobolev inequality on half-spaces for functions which do not necessarily vanish on the boundary. For the sake of completeness we include a short derivation of this inequality. The precise statement involves the closure $\dot W^s_p(\R^N_+)$ of $C_c^\infty(\overline{\R^N_+})$ with respect to the left side of \eqref{eq:hardy}.

\begin{lem}\label{sob}
 Let $N\geq 1$, $1\leq p<\infty$ and $0<s<1$ with $ps<N$. Then there is a constant $\mathcal C_{N,p,s}>0$ such that
\begin{align*}\label{eq:sob}
\iint_{\R^N_+\times\R^N_+} \frac{|u(x)-u(y)|^p}{|x-y|^{N+ps} } \,dx\,dy
\geq \mathcal C_{N,p,s} \, \left( \int_{\R^N_+} |u(x)|^q \,dx \right)^{p/q}
\end{align*}
for all $u\in\dot W^s_p(\R^N_+)$, where $q=Np/(N-ps)$.
\end{lem}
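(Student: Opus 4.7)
The plan is to reduce to the fractional Sobolev inequality on all of $\R^N$ via even reflection across the boundary. Given $u\in C_c^\infty(\overline{\R^N_+})$, define the even extension $\tilde u\in C_c^\infty(\R^N)$ by $\tilde u(x',x_N)=u(x',|x_N|)$. Since $\tilde u$ is compactly supported and continuous, it belongs to $\dot W^s_p(\R^N)$, and the classical fractional Sobolev inequality gives
\begin{equation*}
\iint_{\R^N\times\R^N} \frac{|\tilde u(x)-\tilde u(y)|^p}{|x-y|^{N+ps}}\,dx\,dy \geq \mathcal C'_{N,p,s} \left( \int_{\R^N} |\tilde u(x)|^q\,dx \right)^{p/q}.
\end{equation*}
The right side equals $\mathcal C'_{N,p,s}\, 2^{p/q} \bigl(\int_{\R^N_+}|u|^q\bigr)^{p/q}$ by symmetry, which already produces the desired right-hand side. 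What remains is to bound the full-space Gagliardo seminorm of $\tilde u$ by a multiple of the half-space Gagliardo seminorm of $u$.

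To do this, split the double integral over $\R^N\times\R^N$ into the four quadrants according to the signs of $x_N$ and $y_N$. The two quadrants where $x_N$ and $y_N$ have the same sign each contribute exactly $\iint_{\R^N_+\times\R^N_+} |u(x)-u(y)|^p|x-y|^{-N-ps}\,dx\,dy$, by the definition of $\tilde u$ (and a change of variables in the negative quadrant). For the mixed quadrant with $x_N>0$, $y_N<0$, write $\tilde y=(y',-y_N)\in\R^N_+$; then $\tilde u(y)=u(\tilde y)$ and
\begin{equation*}
|x-y|^2 = |x'-y'|^2 + (x_N+\tilde y_N)^2 \geq |x'-y'|^2 + (x_N-\tilde y_N)^2 = |x-\tilde y|^2,
\end{equation*}
so that the pointwise kernel estimate $|x-y|^{-N-ps}\leq |x-\tilde y|^{-N-ps}$ allows this quadrant to be bounded by another copy of the half-space integral. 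The symmetric mixed quadrant is handled identically. Summing the four contributions yields
\begin{equation*}
\iint_{\R^N\times\R^N}\frac{|\tilde u(x)-\tilde u(y)|^p}{|x-y|^{N+ps}}\,dx\,dy \leq 4\iint_{\R^N_+\times\R^N_+}\frac{|u(x)-u(y)|^p}{|x-y|^{N+ps}}\,dx\,dy,
\end{equation*}
which combined with the inequality above gives the claim on $C_c^\infty(\overline{\R^N_+})$ with $\mathcal C_{N,p,s}=\mathcal C'_{N,p,s}\,2^{p/q-2}$.

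Finally, by the definition of $\dot W^s_p(\R^N_+)$ as the closure of $C_c^\infty(\overline{\R^N_+})$ under the Gagliardo seminorm, the inequality extends to all of $\dot W^s_p(\R^N_+)$ using Fatou's lemma on the right. The only nontrivial step is the pointwise comparison of $|x-y|$ and $|x-\tilde y|$ across the boundary; this is elementary but is what makes the even reflection the correct choice (an odd reflection would cost a factor that cannot be absorbed for functions not vanishing on $\partial\R^N_+$).
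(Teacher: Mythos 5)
Your proof is correct and follows essentially the same approach as the paper: even reflection to $\R^N$, comparison of the Gagliardo seminorm of $\tilde u$ with that of $u$ (the paper writes the same four-quadrant computation as an identity with a reflected kernel term and then bounds it, exactly as you do via the pointwise estimate $|x-y|\geq|x-\tilde y|$), and then the whole-space fractional Sobolev inequality together with the doubling $\int_{\R^N}|\tilde u|^q = 2\int_{\R^N_+}|u|^q$. The side remark explaining why even (rather than odd) reflection is the correct choice, and the density/Fatou step, are both fine.
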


\begin{proof}
 If $\tilde u$ denotes the even extension of $u$ to $\R^N$, then
\begin{align*}
 \iint_{\R^N\times\R^N} \frac{|\tilde u(x)-\tilde u(y)|^p}{|x-y|^{N+ps} } \,dx\,dy
&=  \, 2 \iint_{\R^N_+\times\R^N_+} \frac{|u(x)-u(y)|^p}{|x-y|^{N+ps} } \,dx\,dy \\
&\!\!\!\!\!\!\!\!\!\!\!\!\!\!\!\!\!\!\!\!\!\!\!\!\!\!\!\!\!\!\!\!\!\!\!\!\!\!\!\!\!\!\!\!\!\!
 + 2 \iint_{\R^N_+\times\R^N_+} \frac{|u(x)-u(y)|^p}{(|x'-y'|^2+(x_N+y_N)^2)^{(N+ps)/2}} \,dx\,dy \\
& \leq \, 4 \iint_{\R^N_+\times\R^N_+} \frac{|u(x)-u(y)|^p}{|x-y|^{N+ps} } \,dx\,dy \,.
\end{align*}
On the other hand, by the `standard' fractional Sobolev inequality on $\R^N$ (see, e.g., \cite{FrSe1} for explicit constants) the left side is an upper bound on
\begin{equation*}
 \mathcal S_{N,p,s} \, \left( \int_{\R^N} |\tilde u(x)|^q \,dx \right)^{p/q}
= 2^{p/q} \mathcal S_{N,p,s} \, \left( \int_{\R^N_+} |u(x)|^q \,dx \right)^{p/q} \,. \qedhere
\end{equation*}
\end{proof}

\begin{rem}
 The above proof of the fractional HSM inequality works analogously in the local case, that is, to show that
\begin{equation}\label{eq:hsm}
\int_{\R^N_+} |\nabla u|^p \,dx - \left(\frac{p-1}{p}\right)^p \int_{\R^N_+} \frac{|u|^p}{x_N^{p}}\,dx
\geq \sigma_{N,p,1} \left( \int_{\R^N_+} |u|^q \,dx \right)^{p/q},
\ q= \frac{Np}{N-p},
\end{equation}
for $u\in$ \textit{\r{W}\,}$^1_p(\R^N_+)$ when $N\geq 3$ and $2\leq p<N$. Again, the starting point \cite{FrSe1} is to bound the left side from below by an explicit constant $c_p>0$ times
$$
\int_{\R^N_+} |\nabla v|^p x_N^{p-1} \,dx \,,
\qquad v= x_N^{-(p-1)/p} u \,.
$$
(For $p=2$, this is an identity with $c_2=1$.) Next, we write $x_N^a = a \int_0^\infty \chi_{(t,\infty)}(x_N) t^{a-1} \,dt$ and use Sobolev's inequality on the half-space $\{x_N>t\}$ together with Minkowski's inequality. Note that the sharp constants in this half-space inequality are known explicitly (namely, given in terms of the whole-space constants via the reflection method of Lemma \ref{sob}).
\end{rem}

The sharp constant in \eqref{eq:hsm} for $p=2$ and $N=3$ was found in \cite{BFL}. We think it would be interesting to investigate this question for the non-local inequality \eqref{eq:main} and we believe that \cite{Sloane} is a promising step in this direction.


\section{The inequality on a ball}\label{sec:proof1}

Our goal in this section is to prove a fractional Hardy--Sobolev--Mazya inequality on the ball $B_r\subset\R^N$, $N\geq 2$, of radius $r$ centered at the origin. The argument follows that from the previous section, but is more involved. More precisely, we shall prove

\begin{prop}\label{ballprop}
Let $N\geq 2$, $p=2$ and $\frac{1}{2}<s<1$. Then there is a constant $c=c(s,N)>0$ such that for every $0<r<\infty$ and $u\in$\textit{\r{W}\,}$^s_2(B_r)$,
\begin{align}
\int_{B_r} \! \int_{B_r}
\frac{|u(x)-u(y)|^2}{|x-y|^{N+2s}} \,dx\,dy & - \mathcal D_{N,p,s} \int_{B_r} \frac{(2r)^{2s}}{(r^2-|x|^2)^{2s}} |u(x)|^2 \,dx
\nonumber\\
& \geq c \left(\int_{B_r} {|u(x)|^{q}}\,dx \right) ^ {2/q}, \label{HSMball}
\end{align}
where $q=2N/(N-2s)$.
\end{prop}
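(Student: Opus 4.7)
The plan is to mirror the halfspace proof of Section~\ref{sec:proof2}, with the weight $x_N^{(2s-1)/2}$ replaced by the ball analogue
\[
w(x)=\left(\frac{r^2-|x|^2}{2r}\right)^{(2s-1)/2},
\]
which is the conformal image of the halfspace ground state under a M\"obius map $\R^N_+ \to B_r$ and is tailored so that the Hardy weight $(2r)^{2s}/(r^2-|x|^2)^{2s}$ appears naturally. Writing $u = wv$, I first want a ground-state representation on the ball: since $p=2$, the identity
\[
\iint_{B_r\times B_r} \frac{|u(x)-u(y)|^2}{|x-y|^{N+2s}} \,dx\,dy - \mathcal D_{N,2,s} \int_{B_r} \frac{(2r)^{2s}|u(x)|^2}{(r^2-|x|^2)^{2s}}\,dx = \iint_{B_r\times B_r} \frac{|v(x)-v(y)|^2}{|x-y|^{N+2s}}\,w(x)w(y)\,dx\,dy =: J[v]
\]
should hold, obtained either by a Frank--Seiringer computation with $w$ as a ground state, or by pulling back \eqref{eq:gsr} along a M\"obius map.

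Next I exploit $2s-1>0$: use $w(x)w(y)\ge \min\{w(x)^2, w(y)^2\}= (\min\{D(x),D(y)\})^{2s-1}$ with $D(x):=(r^2-|x|^2)/(2r)$, and expand by layer cake as
\[
w(x)w(y) \ge (2s-1) \int_0^{r/2} \chi_{B_{R(t)}}(x)\, \chi_{B_{R(t)}}(y)\, t^{2s-2}\,dt, \qquad R(t):=\sqrt{r^2-2rt},
\]
using that the superlevel sets $\{D>t\}$ are concentric balls $B_{R(t)}$. Inserting this into $J[v]$ and using Fubini reduces the problem to a fractional Sobolev inequality on $B_{R(t)}$ of the form
\[
\iint_{B_R\times B_R} \frac{|v(x)-v(y)|^2}{|x-y|^{N+2s}}\,dx\,dy \ge C_{N,s} \left(\int_{B_R} |v|^q\,dx\right)^{2/q},
\]
with a constant $C_{N,s}$ independent of $R$ (as follows from the scaling $v\mapsto v(R\cdot)$, since both sides scale by $R^{N-2s}$). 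Then Minkowski's inequality with exponent $q/2\ge 1$, applied exactly as in Section~\ref{sec:proof2}, together with $\int_0^{D(x)} t^{2s-2}\,dt = D(x)^{2s-1}/(2s-1)$ and $|u|^q = |v|^q w^q$, will produce the claimed lower bound $c\,(\int_{B_r} |u|^q\,dx)^{2/q}$.

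The hard part is the boxed Sobolev inequality on $B_R$ in the reduction above: unlike the halfspace version of Lemma~\ref{sob}, which follows from even reflection across the boundary, constants on $B_R$ have vanishing Gagliardo seminorm but positive $L^q$-norm, so that estimate cannot hold verbatim for all $v\in W^{s,2}(B_R)$. To close the gap one must either (a) apply a Sobolev--Poincar\'e inequality to $v-\bar v_R$ and absorb the mean $\bar v_R$ back into the Hardy term on the left, or (b) build a bounded extension operator $W^{s,2}(B_R)\to W^{s,2}(\R^N)$ with $R$-independent norm---for example via inversion through $\partial B_R$ combined with a smooth cutoff to a larger ball---and then invoke the whole-space Sobolev inequality. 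I expect this is precisely the source of the auxiliary inequality \eqref{Hball} ``with two additional terms'' announced in the introduction, the extra terms being what is needed to upgrade the Sobolev bound on $v$ to the $L^q$ bound on $u=wv$.
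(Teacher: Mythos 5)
Your outline matches the paper's in broad strokes (substitution $u=wv$, layer-cake decomposition of the weight with concentric-ball superlevel sets, Sobolev inequality, Minkowski), and you correctly identify the obstruction: a Sobolev inequality on balls cannot hold for all $v\in W^{s,2}(B_R)$ because constants lie in its kernel but not in $L^q$. You also correctly guess that Lemma~\ref{Hball} is designed to supply the missing term. However, the proposal contains a genuine gap, and your claimed starting ``identity'' is in fact false. The $p=2$ ground-state representation on the ball is an \emph{identity} only with the potential $V(x)=-2L_{B_1}w_N(x)/w_N(x)$, and this $V$ is \emph{not} equal to the Hardy potential $\mathcal D_{N,2,s}\,2^{2s}(1-|x|^2)^{-2s}$; rather it strictly exceeds it. That excess (quantified in Lemma~\ref{laplasjanupball} by a pointwise lower bound on $-L_{B_1}w_N$, obtained by slicing $B_1$ along lines and reducing to an explicit 1D formula for $-L_{(-1,1)}w_1$ from \cite{DydaHint}) is exactly the source of the crucial $c\int_{B_1}|v|^2\,dx$ term in Lemma~\ref{Hball}. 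Neither of your two suggested remedies closes the gap as stated: subtracting the mean and ``absorbing it back into the Hardy term'' has nothing to absorb into once the Hardy term is already subtracted, and extension operators do not control the $L^q$-norm of the constant part either. The extra $L^2$-term must come from somewhere, and the paper extracts it from the ground-state potential itself via Lemma~\ref{laplasjanupball}.

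Concretely, what is missing from your argument is the entire content of Lemma~\ref{Hball} and its proof. Once that lemma is available, your steps go through almost verbatim, using the Chen--Kumagai Sobolev inequality of the form
\begin{align*}
R^{2s}\iint_{B_R\times B_R}\frac{|v(x)-v(y)|^2}{|x-y|^{N+2s}}\,dx\,dy
+ c\int_{B_R}|v|^2\,dx
\ \geq\ \tilde c\, R^{2s}\Big(\int_{B_R}|v|^q\,dx\Big)^{2/q},
\end{align*}
whose left-hand side is precisely what the layer-cake decomposition of $\tilde J[v] + c\int|v|^2 w_N^2$ produces on each slice $B_{\sqrt{1-t}}$ (the factor $w_N^2\leq 1$ being dropped). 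So the structure you proposed is right, but the step you flag as ``hard'' is not a minor technicality that can be patched by Poincar\'e or extension; it is the heart of the proof, requiring the pointwise Laplacian estimate that you neither state nor prove. Also, a small point: one does not ``pull back'' the half-space GSR by a M\"obius map here---the regional Gagliardo form on a proper subdomain is not conformally covariant, so the identity must be derived directly from the Frank--Seiringer formula with the specific weight $w_N$ and then estimated.
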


This proves Theorem \ref{thm:HSM} in the special case $\Omega=B_r$ and $p=2$ with $m_{2s}(x)$ replaced by $(r^2-|x|^2)/2r$. We note that $(r^2-|x|^2)/2r \leq \dist (x,B_r^c)$ for $x\in B_r$. (As an aside we note, however, that it is \emph{not always} true that $(r^2-|x|^2)/2r$ is greater than $m_{2s}(x)$. Indeed, take $x=0$ and $N=2$.) 

We also note that Proposition \ref{ballprop} implies Theorem \ref{thm:HSM} for $\Omega=\R^N_+$ (and $p=2$). Indeed, by translation invariance the proposition implies the inequality also on balls $B(a_r, r)$ centered at $a_r=(0,\ldots,0,r)$. We have $\dist(x,B(a_r, r)^c) \leq \dist(x, (\R^N_+)^c)$, and hence the result follows by taking $r \to\infty$.

The crucial ingredient in our proof of Proposition \ref{ballprop} is

\begin{lem}\label{Hball}
Let $N\geq 2$, $\frac{1}{2}<s<1$ and define $w_N(x)=(1-|x|^2)^{\frac{2s-1}{2}}$ for $x\in B_1\subset\R^N$. Then for all $u\in$\textit{\r{W}\,}$^s_2(B_1)$
\begin{align}
\int_{B_1} \! \int_{B_1}
\frac{|u(x)-u(y)|^2}{|x-y|^{N+2s}} \,dx\,dy & - \mathcal D_{N,2,s} \int_{B_1} \frac{2^{2s}}{(1-|x|^2)^{2s}} |u(x)|^2 \,dx
\nonumber\\
& \geq \tilde J[v] + c \int_{B_1} |v(x)|^2 \,dx \,,
 \label{hardyinball}
\end{align}
where $v= u/w_N$,
$$
\tilde J[v] = \int_{B_1} \!\int_{B_1} |v(x)-v(y)|^2 \frac{w_N(x)w_N(y)}{|x-y|^{N+2s}} \,dx\,dy
$$
and $c= s^{-1}(2^{2s-1}-1)|\Sph^{N-1}|>0$.
\end{lem}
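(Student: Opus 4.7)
The approach is a ground-state substitution $u = w_N v$, reducing \eqref{hardyinball} to a pointwise lower bound on an effective potential, and then analyzing this potential along chords of $B_1$ via the factorization $1-|x+r\omega|^2 = (d_\omega(x)-r)(d_{-\omega}(x)+r)$.

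\textbf{Step 1 (ground-state representation).} I would start from the algebraic identity
\[
|u(x)-u(y)|^2 - w_N(x)w_N(y)|v(x)-v(y)|^2 = (w_N(x)-w_N(y))\bigl(w_N(x)v(x)^2 - w_N(y)v(y)^2\bigr),
\]
divide by $|x-y|^{N+2s}$, integrate over $B_1\times B_1$, and symmetrize in $(x,y)$, to obtain
\[
\iint_{B_1\times B_1}\frac{|u(x)-u(y)|^2}{|x-y|^{N+2s}}\,dx\,dy = \tilde J[v] + 2\int_{B_1} v(x)^2\, w_N(x)\,L[w_N](x)\,dx,
\]
with $L[w_N](x):=\mathrm{p.v.}\int_{B_1}\frac{w_N(x)-w_N(y)}{|x-y|^{N+2s}}\,dy$; the principal value is first justified for $u\in C_c^\infty(B_1)$ and then extended by density. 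Consequently \eqref{hardyinball} is equivalent to the pointwise bound
\[
2w_N(x)L[w_N](x) - \frac{\mathcal D_{N,2,s}\,2^{2s}\,w_N(x)^2}{(1-|x|^2)^{2s}} \geq c,\qquad x\in B_1.
\]

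\textbf{Step 2 (chord analysis).} For fixed $x\in B_1$ and $\omega\in\Sph^{N-1}$, the two real roots in $r$ of $1-|x+r\omega|^2=0$ are precisely $d_\omega(x)$ and $-d_{-\omega}(x)$, so
\[
1-|x+r\omega|^2 = \bigl(d_\omega(x)-r\bigr)\bigl(d_{-\omega}(x)+r\bigr), \quad r\in\bigl(-d_{-\omega}(x),\,d_\omega(x)\bigr),
\]
and in particular $d_\omega(x)\,d_{-\omega}(x)=1-|x|^2$. Hence
\[
w_N(x+r\omega) = w_N(x)\bigl[(1-r/d_\omega(x))(1+r/d_{-\omega}(x))\bigr]^{(2s-1)/2}.
\]
Passing to polar coordinates $y=x+r\omega$ in $L[w_N](x)$ and rescaling $r=d_\omega(x)\tau$ yields
\[
L[w_N](x) = w_N(x)\int_{\Sph^{N-1}} \frac{g_s\bigl(d_\omega(x)/d_{-\omega}(x)\bigr)}{d_\omega(x)^{2s}}\,d\omega,
\]
where $g_s(\beta):=\mathrm{p.v.}\int_0^1(1-((1-\tau)(1+\beta\tau))^{(2s-1)/2})\tau^{-1-2s}\,d\tau$. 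The half-space limit $\beta\to 0$ (chord endpoint at infinity) corresponds to the sharp ground-state identity of \cite{FrSe1}; combining this with the explicit formula \eqref{eq:hardyconst} for $\mathcal D_{N,2,s}$ and $d_\omega d_{-\omega}=1-|x|^2$ identifies $g_s(0)$ as exactly the 1D constant that, integrated over $\omega$ (after symmetrizing $\omega\leftrightarrow -\omega$), reproduces the Hardy potential $\mathcal D_{N,2,s}\,2^{2s}w_N(x)^2/(1-|x|^2)^{2s}$.

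\textbf{Step 3 (extracting $c$; main obstacle).} The finite-chord remainder $g_s(\beta)-g_s(0)$ is nonnegative because $s>1/2$ forces $(1+\beta\tau)^{(2s-1)/2}\geq 1$. A Beta-function evaluation then provides a uniform (in $\omega$ and $x$) lower bound on the remainder yielding $c = s^{-1}(2^{2s-1}-1)|\Sph^{N-1}|$; the characteristic factor $2^{2s-1}-1$ emerges from evaluating the 1D integral at the symmetric configuration $(1-\tau)(1+\tau)=1-\tau^2$, and the $s^{-1}$ from a $\tau$-integration against $\tau^{-2s}$ at the boundary. The technical heart of the argument is precisely this step: carefully handling the principal value $g_s(\beta)$ near $\tau=0$, cleanly isolating the sharp half-space Hardy contribution from a positive remainder, and obtaining a remainder bound uniform in $\omega\in\Sph^{N-1}$ and $x\in B_1$ (in particular non-degenerate as $x\to\partial B_1$) which matches the explicit constant in the statement. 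The Beta-function manipulations that produce the factor $2^{2s-1}-1$ cleanly, and the delicate cancellations in the principal value, are the main sources of technical difficulty.
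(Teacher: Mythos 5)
Your plan has the right skeleton — ground-state substitution to reduce \eqref{hardyinball} to a pointwise potential bound, then polar coordinates and a chord-by-chord reduction to a 1D object — and Steps~1 and the algebraic identity behind it are correct and match the paper. But Steps~2--3 have two genuine problems that make the argument not go through as written.

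First, the object $g_s(\beta):=\mathrm{p.v.}\int_0^1\bigl(1-((1-\tau)(1+\beta\tau))^{(2s-1)/2}\bigr)\tau^{-1-2s}\,d\tau$ is ill-defined: the integrand behaves like $-\tfrac{2s-1}{2}(\beta-1)\tau^{-2s}$ near $\tau=0$, which is non-integrable for $2s>1$, and a one-sided ``principal value'' on $(0,1)$ has nothing to cancel it against. The true principal value lives on the whole chord $(-d_{-\omega},d_\omega)$, and under your rescaling $\tau=r/d_\omega$ the two halves are rescaled by different amounts ($d_\omega$ vs.\ $d_{-\omega}$), so they do not pair off in a single $\tau$-integral. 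Even the difference $g_s(\beta)-g_s(0)$ diverges, because the coefficient of the $\tau^{-2s}$ singularity depends on $\beta$. The paper avoids this by centering the change of variable at the chord midpoint, i.e.\ $t=-x h_N+u\sqrt{1-x^2+x^2 h_N^2}$, which maps the chord to the symmetric interval $(-1,1)$ and converts the inner p.v.\ integral \emph{exactly} into $(1-x^2+x^2h_N^2)^{-1/2}\bigl(-L_{(-1,1)}w_1\bigr)(x h_N/\sqrt{1-x^2+x^2 h_N^2})$. This is the structural insight you are missing: the weight $(1-|y|^2)^{(2s-1)/2}$ restricted to any chord is, after affine reparametrization to $(-1,1)$, again $(1-u^2)^{(2s-1)/2}$, so the 1D result from \cite{DydaHint} (inequality \eqref{Lw1}) applies verbatim.

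Second, even setting aside convergence, the monotonicity claim in Step~3 points the wrong way. Since $s>1/2$, increasing $\beta$ makes $(1+\beta\tau)^{(2s-1)/2}$ larger, hence $1-((1-\tau)(1+\beta\tau))^{(2s-1)/2}$ \emph{smaller}; so $g_s$ is \emph{decreasing} in $\beta$, and $g_s(\beta)-g_s(0)\le 0$, not $\ge 0$. The positive remainder in the paper does not come from a monotonicity comparison between a finite chord and a half-line; it comes from the explicit two-term lower bound for $-L_{(-1,1)}w_1$ (with constants $c_1,c_2$), applied after the midpoint-centered reduction, together with the elementary inequalities $1-x^2+x^2 h_N^2\ge|h_N|^2$ and $1-x^2+x^2h_N^2\le 1$. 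Finally, Step~3 asserts but does not derive the constant $c=s^{-1}(2^{2s-1}-1)|\Sph^{N-1}|$; in the paper this is exactly $c_2|\Sph^{N-1}|$ with $c_2=(2^{2s}-2)/(2s)$, and the computation is carried through via the identities $c_1=2^{2s-1}\mathcal D_{1,2,s}$ and $\mathcal D_{N,2,s}=\tfrac12\mathcal D_{1,2,s}\int_{\Sph^{N-1}}|h_N|^{2s}\,dh$. As it stands your proposal identifies, but does not overcome, precisely the ``technical heart'' it flags.
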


This inequality is somewhat analogous to \eqref{eq:gsr} in the previous proof. We emphasize, however, that there are two terms on the right side of \eqref{hardyinball} and we will need both of them. Accepting this lemma for the moment, we now complete the

\begin{proof}[Proof of Proposition \ref{ballprop}]
By scaling, we may and do assume that $r=1$, that is, we consider only the unit ball $B_1\subset \R^N$. We put $v=u/w_N$ with $w_N$ defined in Lemma \ref{Hball}. According to that lemma, the left side of \eqref{HSMball} is bound from below by
\begin{align}
\tilde J[v] + c \int_{B_1} |v(x)|^2 w_N(x)^2 \,dx, \label{eq:reduction}
\end{align}
(Here we also used that $w_N\leq 1$.)
For $x,y\in B_1$ we have
\begin{align*}
 w_N(x)w_N(y) &\geq \min\{(1-|x|^2)^{2s-1}, (1-|y|^2)^{2s-1}\}  \nonumber\\
&= (2s-1) \int_0^1 \chi_{(t,1]}(1-|x|^2) \chi_{(t,1]}(1-|y|^2) t^{2s-2}\,dt \,, 
\end{align*}
and therefore,
\begin{align*}
& \tilde J[v] + c \int_{B_1} |v(x)|^2 w_N(x)^2\,dx \nonumber\\
& \geq (2s-1)
  \int_0^1 \left( \int_{B_{\sqrt{1-t}}} \! \int_{B_{\sqrt{1-t}}}  \frac{|v(x)-v(y)|^2 }{|x-y|^{N+2s}} \,dx\,dy
+ c \int_{B_{\sqrt{1-t}}} |v(x)|^2 \,dx \right) t^{2s-2}\,dt \,.
\end{align*}
The fractional Sobolev inequality \cite[(2.3)]{ChenKumagai} and a scaling argument imply that there is a $\tilde c>0$ such that for all $r>0$,
\begin{equation*}
r^{2s} \int_{B_r} \! \int_{B_r}
   \frac{|v(x)-v(y)|^2 }{|x-y|^{N+2s}} \,dx\,dy
+ c \int_{B_r} |v(x)|^2 \,dx  \geq \tilde c r^{2s} \left( \int_{B_r} |v(x)|^{q}\,dx \right)^{2/q}.
\end{equation*}
Combining the last two relations and applying Minkowski's inequality, we may bound
\begin{align}
& \tilde J[v] + c \int_{B_1} |v(x)|^2 w_N(x)^2\,dx \nonumber\\
& \geq (2s-1) \tilde c \int_0^1 \left(  \int_{B_{\sqrt{1-t}}} |v(x)|^{q} \,dx \right)^{\frac{2}{q}} (\sqrt{1-t})^{2s} t^{2s-2}\,dt  \nonumber\\
&\geq  (2s-1) \tilde c
\left( \int_{B_1} |v(x)|^{q} \left( \int_0^{1-|x|^2} (1-t)^s t^{2s-2}\,dt \right)^{\frac{q}{2}} dx \right)^{\frac{2}{q}}. \label{suma}
\end{align}
We observe that
\[
 \int_0^{1-|x|^2} (1-t)^s t^{2s-2}\,dt \geq B(s+1,2s-1) (1-|x|^2)^{2s-1},
\]
which follows from the fact that $y\mapsto \int_0^y (1-t)^s t^{2s-2}\,dt / \int_0^y t^{2s-2}\,dt$
is decreasing on $(0,1)$.
This allows us to bound the expression in (\ref{suma}) from below by
\begin{align*}
(2s-1)& B(s+1,2s-1) \tilde c \left( \int_{B_1} |v(x)|^{q} (1-|x|^2)^{(s-1/2)q} dx \right)^{\frac{2}{q}}\\
 &=
(2s-1) B(s+1,2s-1) \tilde c \left( \int_{B_1} |u(x)|^{q}  dx \right)^{\frac{2}{q}} \,,
\end{align*}
and we are done.
\end{proof}

This leaves us with proving Lemma \ref{Hball}. We need to introduce some notation. The regional Laplacian (see, e.g., \cite{MR2214908})
on an open set $\Omega\subset\R^N$ is, up to a multiplicative constant, given by
\[
 L_\Omega u(x) =  \lim_{\varepsilon\to 0^+}
\int_{\Omega \cap \{|y-x|>\varepsilon\}}\frac{u(y)-u(x)}{|x-y|^{N+2s}} \,dy.
\]
This operator appears naturally in our context since
$$
\int_\Omega \overline{u(x)} (L_\Omega u)(x) \,dx 
= -\frac12 \int_\Omega \! \int_\Omega \frac{|u(x)-u(y)|^2}{|x-y|^{N+2s}} \,dx\,dy \,.
$$
Our proof of Lemma \ref{Hball} relies on a pointwise estimate for $L_{B_1} w_N$. In dimension $N=1$ this can be computed explicitly and we recall from \cite[Lemma 2.1]{DydaHint} that
\[
 - L_{(-1,1)}w_1(x) = \frac{(1-x^2)^{\frac{-2s-1}{2}}}{2s}  \left(
B( s+{\textstyle \frac{1}{2}} ,1-s) - (1 - x)^{2s} + (1 + x)^{2s} \right) \,.
\]
Hence, by \cite[(2.3)]{DydaHint},
\begin{equation}\label{Lw1}
 - L_{(-1,1)}w_1(x) \geq c_1 (1-x^2)^{\frac{-2s-1}{2}} + c_2 (1-x^2)^{\frac{-2s+1}{2}},
\end{equation}
where
\[
 c_1=\frac{B( s+{\textstyle \frac{1}{2}} ,1-s) - 2^{2s}}{2s},\quad
c_2=\frac{2^{2s}-2}{2s} \,.
\]

\begin{lem}\label{laplasjanupball}
Let $N\geq 2$ and let $w_N$ be as in Lemma \ref{Hball}. Then
\[ 
 -L_{B_1} w_N(x) \geq  \frac{c_1}{2} \int_{\Sph^{N-1}}|h_N|^{2s} dh
\cdot  (1-|x|^2)^{-\frac{2s+1}{2}}
+ \frac{c_2}2|\Sph^{N-1}| \cdot (1-|x|^2)^{-\frac{2s-1}{2}} \,.
\]
\end{lem}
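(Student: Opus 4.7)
The plan is to reduce the $N$-dimensional regional Laplacian of $w_N$ to an angular average of one-dimensional regional Laplacians of $w_1$ along chords of $B_1$ through $x$, invoke the pointwise bound \eqref{Lw1}, and finally estimate the resulting integrals over $\Sph^{N-1}$.

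First I would pass to polar coordinates $y = x + r\omega$ centered at $x$. A symmetrization in $\omega \leftrightarrow -\omega$ gives
\[
-L_{B_1} w_N(x) = \frac{1}{2} \int_{\Sph^{N-1}} \int_{t_-(\omega)}^{t_+(\omega)} \frac{w_N(x) - w_N(x+r\omega)}{|r|^{1+2s}} \, dr \, d\omega,
\]
where $(t_-(\omega), t_+(\omega))$ is the chord of $B_1$ through $x$ in direction $\omega$. Solving the quadratic $|x + r\omega|^2 = 1$, the midpoint is $m(\omega) = -\langle x,\omega\rangle$ and the half-length is $\ell(\omega) = \sqrt{(1-|x|^2) + \langle x,\omega\rangle^2}$. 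The crucial scaling identity $1 - |x+r\omega|^2 = \ell(\omega)^2(1-u^2)$ under $r = m(\omega) + \ell(\omega) u$ yields $w_N(x+r\omega) = \ell(\omega)^{2s-1} w_1(u)$, and a direct substitution collapses the inner integral to $\ell(\omega)^{-1}\cdot (-L_{(-1,1)} w_1)(s_0(\omega))$, with $s_0(\omega) = \langle x,\omega\rangle/\ell(\omega) \in (-1,1)$ and $1 - s_0(\omega)^2 = (1-|x|^2)/\ell(\omega)^2$. Applying the pointwise bound \eqref{Lw1} in $\omega$ then delivers
\[
-L_{B_1} w_N(x) \geq \frac{c_1}{2}(1-|x|^2)^{-\frac{2s+1}{2}} \int_{\Sph^{N-1}} \ell(\omega)^{2s}\,d\omega + \frac{c_2}{2}(1-|x|^2)^{-\frac{2s-1}{2}} \int_{\Sph^{N-1}} \ell(\omega)^{2s-2}\,d\omega.
\]

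It then remains to bound the two angular integrals from below. For the second, $\ell(\omega)^2 \leq (1-|x|^2) + |x|^2 = 1$ together with $2s-2 \leq 0$ gives $\ell^{2s-2} \geq 1$, hence $\int_{\Sph^{N-1}} \ell^{2s-2}\,d\omega \geq |\Sph^{N-1}|$. For the first, the elementary identity $\ell(\omega)^2 - \langle\omega,\hat x\rangle^2 = (1-|x|^2)(1 - \langle\omega,\hat x\rangle^2) \geq 0$ (with $\hat x = x/|x|$) yields $\ell(\omega)^{2s} \geq |\langle\omega,\hat x\rangle|^{2s}$, and rotational invariance of the surface measure on $\Sph^{N-1}$ turns this into $\int_{\Sph^{N-1}} \ell^{2s}\,d\omega \geq \int_{\Sph^{N-1}} |h_N|^{2s}\,dh$. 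Combining the pieces gives precisely the inequality asserted in the lemma. I expect the main obstacle to be the careful bookkeeping in the one-dimensional reduction—tracking the symmetrization factor $1/2$ and the affine chord parametrization correctly—while the final angular estimates become transparent once one spots the clean identity $1 - s_0^2 = (1-|x|^2)/\ell^2$.
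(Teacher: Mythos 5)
Your proposal is correct and follows essentially the same route as the paper's proof: polar coordinates centered at $x$ (with the symmetrization factor $\tfrac12$), the affine chord substitution $r = m(\omega)+\ell(\omega)u$ reducing the inner integral to $\ell(\omega)^{-1}(-L_{(-1,1)}w_1)(s_0(\omega))$ via the identity $1-|x+r\omega|^2 = \ell(\omega)^2(1-u^2)$, the one-dimensional bound \eqref{Lw1}, and finally the elementary angular estimates $\ell(\omega)^{2s}\ge|\langle\omega,\hat x\rangle|^{2s}$ and $\ell(\omega)^{2s-2}\ge 1$. The only cosmetic difference is that the paper invokes rotational invariance at the outset to place $x$ along the $N$-th axis (so that $\langle\omega,\hat x\rangle = h_N$ from the start), whereas you keep $x$ general and use rotational invariance of the surface measure at the last step; the substance is the same.
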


\begin{proof}
By rotation invariance we may assume that $\mathbf{x}=(0,0,\ldots,0,x)$. With the notation $p=\frac{2s-1}{2}$ we have
\begin{align*}
 -L_{B_1}w_N(\mathbf{x}) &= p.v. \int_{B_1} \frac{(1-|\mathbf{x}|^2)^p - (1-|y|^2)^p}
     {|\mathbf{x}-y|^{N+2s}}\,dy\\
&=\frac{1}{2} \int_{\Sph^{N-1}} dh
\; p.v.\int_{-xh_N  - \sqrt{x^2h_N^2-x^2+1}}^{-xh_N  + \sqrt{x^2h_N^2-x^2+1}}
\frac{ (1-|x|^2)^p - (1-|x+h t|^2)^p}{|t|^{1+2s}} \,dt \,.
\end{align*}
We calculate the inner principle value integral by changing the variable
$t=-xh_N + u \sqrt{x^2h_N^2-x^2+1}$
\begin{align*}
g(x,h)&:=
p.v.\int_{-xh_N  - \sqrt{x^2h_N^2-x^2+1}}^{-xh_N  + \sqrt{x^2h_N^2-x^2+1}}
\frac{ (1-|x|^2)^p - (1-|x+h t|^2)^p}{|t|^{1+2s}} \,dt\\
&=p.v. \int_{-1}^1
  \frac{(1-x^2)^p - (1-u^2)^p(1-x^2+x^2h_N^2)^p}
    {|-xh_N + u \sqrt{x^2h_N^2-x^2+1}|^{1+2s}}\,
 \sqrt{x^2h_N^2-x^2+1}\,du\\
&=
(1-x^2+x^2h_N^2)^{p-s}
 p.v.\int_{-1}^1 \frac{ (1- \frac{x^2h_N^2}{1-x^2+x^2h_N^2})^p - (1-u^2)^p}
 { |u - \frac{xh_N}{\sqrt{1-x^2+x^2h_N^2}}|^{1+2s}}\,du\\
&=(1-x^2+x^2h_N^2)^{-1/2}
(-L_{(-1,1)}w_1)(\frac{xh_N}{\sqrt{1-x^2+x^2h_N^2}}) \,.
\end{align*}
Hence by (\ref{Lw1}) we have
\begin{align*}
g(x,h) &\geq
(1-x^2+x^2h_N^2)^{-1/2} \bigg(
 c_1 (1- \frac{x^2h_N^2}{1-x^2+x^2h_N^2})^{\frac{2s-1}{2}-2s}\\
&\qquad\qquad\qquad\qquad\qquad\quad
  + c_2 (1- \frac{x^2h_N^2}{1-x^2+x^2h_N^2})^{\frac{2s-1}{2}-2s+1}
\bigg)\\
&= c_1(1-x^2+x^2h_N^2)^{s}(1-x^2)^{-\frac{2s+1}{2}}+
 c_2(1-x^2+x^2h_N^2)^{s-1}(1-x^2)^{-\frac{2s-1}{2}}\\
&\geq c_1 |h_N|^{2s} (1-x^2)^{-\frac{2s+1}{2}}
+ c_2 (1-x^2)^{-\frac{2s-1}{2}} \,.
\end{align*}
Thus
\begin{align*}
 -L_{B_1}w_N(\mathbf{x}) &=
 \frac{1}{2} \int_{\Sph^{N-1}}g(x,h) dh \\
&\geq
 \frac{c_1}{2} \int_{\Sph^{N-1}}|h_N|^{2s} dh
\cdot  (1-x^2)^{-\frac{2s+1}{2}}
+ \frac{c_2}2|\Sph^{N-1}| \cdot (1-x^2)^{-\frac{2s-1}{2}} \,,
\end{align*}
and we are done.
\end{proof}

Finally, we are in position to give the

\begin{proof}[Proof of Lemma \ref{Hball}]
We use the ground state representation formula \cite{FrSe1}, see also \cite[Lemma 2.2]{DydaHint},
\begin{align*}
\int_{B_1} \! \int_{B_1}
\frac{|u(x)-u(y)|^2}{|x-y|^{N+2s}} \,dx\,dy + 2\int_{B_1} \frac{Lw_N(x)}{w_N(x)} |u(x)|^2 \,dx
= \tilde J[v]
\end{align*}
with $u=w_N v$ and $\tilde J$ as defined in the lemma. The assertion now follows from Lemma \ref{laplasjanupball}, which implies that
$$
-2 \frac{Lw_N(x)}{w_N(x)} \geq \mathcal D_{N,2,s} \frac{2^{2s}}{(1-|x|^2)^{2s}} + c (1-|x|^2)^{-2s+1}
$$
with $c=c_2 |\Sph^{N-1}|>0$. Indeed, here we used $2^{2s-1} \mathcal D_{1,2,s} = c_1$ and
\[
 \mathcal D_{N,2,s} = \mathcal D_{1,2,s}\cdot \frac{1}{2}\int_{\Sph^{N-1}} |h_N|^{2s}\,dh \,.
\]
as a quick computation shows.
\end{proof}


\section{The inequality in the general case}\label{sec:proof3}

In this section we shall give a complete proof of Theorem \ref{thm:HSM}. Our strategy is somewhat reminiscent of the proof of the Hardy--Sobolev--Maz'ya inequality in the local case in \cite{FLHSM}. As in that paper we use an averaging argument \'a la Gagliardo--Nirenberg to reduce the multi-dimensional case to the one-dimensional case. We describe this reduction in Subsection \ref{sec:reduc} and establish the required 1D inequality in Subsection \ref{sec:key}.

\subsection{Reduction to one dimension}\label{sec:reduc}

The key ingredient in our proof of Theorem \ref{thm:HSM} is the following pointwise estimate of a function on an interval.

\begin{lem}\label{lem:dim1}
Let $0<s<1$, $q\geq 1$ and $p\geq 2$ with $ps>1$. Then there is a $c=c(s,q,p)<\infty$ such that for all $f\in C_c^\infty(-1,1)$
\begin{equation}\label{eq:aim}
\|f\|_\infty^{p+q(ps-1)} \leq
c \left( \int_{-1}^1 \int_{-1}^1 \frac{|f(x)-f(y)|^p}{|x-y|^{1+ps}}\,dy\,dx -\mathcal D_{1,p,s} \int_{-1}^1 \frac{|f(x)|^p}{(1-|x|)^{ps}}\,dx
\right) 
\|f\|_q^{q(ps-1)}.
\end{equation}
\end{lem}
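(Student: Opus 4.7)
The plan is to mimic, in dimension one, the ground-state-representation strategy used in the proof of Proposition~\ref{ballprop}, replacing the fractional Sobolev inequality on balls by the one-dimensional Morrey embedding $W^{s,p}\hookrightarrow L^\infty$ (which is available because $ps>1$). Setting $w(x)=(1-x^2)^{(ps-1)/p}$ and $v=f/w$, I first note that $m_{ps}(x)=1-|x|$ in dimension one, so the Hardy term in \eqref{eq:aim} is the sharp Loss--Sloane one and the deficit is nonnegative. The key preliminary step is the $1$D, general-$p$ analog of Lemma~\ref{Hball}:
\[
\iint_{(-1,1)^2}\!\frac{|f(x)-f(y)|^p}{|x-y|^{1+ps}}\,dx\,dy-\mathcal D_{1,p,s}\!\int_{-1}^1\!\frac{|f|^p}{(1-|x|)^{ps}}\,dx\;\geq\; c_p\,\tilde J[v]+c'\,\|f\|_p^p,
\]
where $\tilde J[v]=\iint|v(x)-v(y)|^p(w(x)w(y))^{p/2}|x-y|^{-1-ps}\,dx\,dy$ and the identity $\|f\|_p^p=\int|v|^pw^p\,dx$ is used. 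The remainder $c'\|f\|_p^p$ is essential: without it the ``ground state'' $f=\lambda w$ (suitably cut off) would make the right-hand side vanish while the left-hand side stays positive. For $p=2$ this is Lemma~\ref{Hball} specialized to $N=1$; for $p\geq 2$ it follows by combining a 1D pointwise lower bound on $-L_{(-1,1)}w/w^{p-1}$ analogous to Lemma~\ref{laplasjanupball} and based on \eqref{Lw1}, with the Frank--Seiringer strong-convexity inequality $|a-b|^p\geq|a|^p-p|a|^{p-2}a(a-b)+c_p|a-b|^p$ for $p\geq 2$ from \cite{FrSe1}.

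Next, exactly as in the proof of Proposition~\ref{ballprop}, the bound $(w(x)w(y))^{p/2}\geq\min\{w(x)^p,w(y)^p\}$ together with the layer-cake identity $w(x)^p=(ps-1)\int_0^1\chi_{\{1-x^2>t\}}t^{ps-2}\,dt$ reduces the right-hand side of the previous display to
\[
(ps-1)\int_0^1 t^{ps-2}\,A(t)\,dt,\qquad A(t):=c_p\!\iint_{I_r\times I_r}\!\frac{|v(x)-v(y)|^p}{|x-y|^{1+ps}}\,dx\,dy+c'\!\int_{I_r}|v|^p\,dx,
\]
with $r=\sqrt{1-t}$ and $I_r=(-r,r)$. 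On each $I_r$ the embedding $W^{s,p}(I_r)\hookrightarrow L^\infty(I_r)$, combined by scaling from $I_1$ with a Gagliardo--Nirenberg interpolation against $L^q(I_r)$, yields
\[
A(t)\;\geq\; c\,r^{ps}\,\|v\|_{L^\infty(I_r)}^{\,p+q(ps-1)}\,\|v\|_{L^q(I_r)}^{-q(ps-1)}.
\]
Let $x_0$ maximize $|f|$. For every $r>|x_0|$ one has $\|v\|_{L^\infty(I_r)}\geq |v(x_0)|=w(x_0)^{-1}\|f\|_\infty$, and the pointwise bound $w(x)\geq t^{(ps-1)/p}$ on $I_r$ gives $\|v\|_{L^q(I_r)}\leq t^{-(ps-1)/p}\|f\|_q$. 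Substituting these into $(ps-1)\int_0^{1-x_0^2} t^{ps-2}A(t)\,dt$, the lower bound on the deficit becomes
\[
C\,\|f\|_\infty^{\,p+q(ps-1)}\,\|f\|_q^{-q(ps-1)}\,(1-x_0^2)^{-(ps-1)-q(ps-1)^2/p}\!\int_0^{1-x_0^2}\! t^{ps-2+q(ps-1)^2/p}(1-t)^{ps/2}\,dt.
\]
A Beta-function estimate of the same kind used just above \eqref{suma} shows that the product of the prefactor and the $t$-integral is bounded below by a positive constant depending only on $p,s,q$ (uniformly in $x_0\in(-1,1)$, because as $1-x_0^2\to 0^+$ the prefactor $\epsilon^{-(ps-1)-q(ps-1)^2/p}$ is exactly canceled by the asymptotic $\epsilon^{ps-1+q(ps-1)^2/p}/(ps-1+q(ps-1)^2/p)$ of the integral), yielding \eqref{eq:aim}.

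The principal obstacle is Step~1. For $p=2$ this is just the $N=1$ case of Lemma~\ref{Hball}, but for general $p\geq 2$ one must extend the ground-state representation to produce both the weighted semi-norm $\tilde J[v]$ with the correct Loss--Sloane constant $\mathcal D_{1,p,s}$ and the $\|f\|_p^p$ remainder. This requires an interval analog of Lemma~\ref{laplasjanupball} valid for all $p\geq 2$, namely a pointwise lower bound on $-L_{(-1,1)}w$ of the form $\bigl(c_1(1-x^2)^{-(ps+1)/2}+c_2(1-x^2)^{-(ps-1)/2}\bigr)w(x)^{p-1}$ obtainable by the change-of-variable trick from the proof of Lemma~\ref{laplasjanupball} together with \eqref{Lw1}, followed by an application of Frank--Seiringer's $p\geq 2$ convexity argument to convert this pointwise estimate into the integral inequality above. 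A secondary technical point is verifying the uniformity in $x_0$ described above, which is essentially automatic thanks to the exact exponent matching in the Beta-function estimate.
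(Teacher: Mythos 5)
Your approach is genuinely different from the paper's: you transplant the Section~\ref{sec:proof1} ball argument (ground state representation with the weight $w(x)=(1-x^2)^{(ps-1)/p}$, a layer-cake decomposition into nested intervals, a rescaled Sobolev/Morrey bound on each level, Minkowski, and a Beta-function estimate), whereas the paper uses Corollary~\ref{cor:prem} (ground state representation with the distance weight $(1-|x|)^{(ps-1)/p}$, computed via Frank--Seiringer on the half-line and then reflected), the Garsia--Rodemich--Rumsey oscillation bound \eqref{eq:key}, and a direct three-case analysis of the point where $|f|$ is maximized. The layer-cake and Beta-function steps in your proposal check out: the rescaled Morrey--GN inequality $[\tilde v]_p^p+\|\tilde v\|_p^p\gtrsim\|\tilde v\|_\infty^{p+q(ps-1)}\|\tilde v\|_q^{-q(ps-1)}$ on $I_1$ is correct, and the prefactor $\epsilon^{-(ps-1)-q(ps-1)^2/p}$ is indeed exactly cancelled by the monotone-ratio Beta estimate (the relevant exponent $ps-1+q(ps-1)^2/p$ is positive since $ps>1$). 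For $p=2$ your Step~1 is precisely the $N=1$ content of \eqref{Lw1}, and the whole argument goes through as an interesting alternative proof of the lemma in that case.

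However, there is a genuine gap in Step~1 for $p>2$, which you yourself flag as the ``principal obstacle'' without resolving it. The Frank--Seiringer ground state representation for $p\geq 2$ does not involve the linear regional operator $L_{(-1,1)}$ applied to $w$; the quantity that replaces $-2Lw/w$ is the nonlinear, $(p-1)$-homogeneous expression
\[
V(x)=2\,w(x)^{-p+1}\,\mathrm{p.v.}\!\int_{-1}^1\bigl(w(x)-w(y)\bigr)\,\bigl|w(x)-w(y)\bigr|^{p-2}\,|x-y|^{-1-ps}\,dy ,
\]
as in the proof of Lemma~\ref{lem:prem}. Your plan is to obtain a pointwise lower bound for this $V$ ``from \eqref{Lw1} and the change-of-variable trick of Lemma~\ref{laplasjanupball}'', but both \eqref{Lw1} and that scaling argument are strictly $p=2$ computations for the \emph{linear} operator $L$. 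They say nothing about the nonlinear $V$ for $w(x)=(1-x^2)^{(ps-1)/p}$ when $p>2$, and no such formula or estimate is available. The Frank--Seiringer convexity inequality $|a-b|^p\geq|a|^p-p|a|^{p-2}a(a-b)+c_p|a-b|^p$ only converts a lower bound on $V$ into the integral ground-state inequality; it does not produce that lower bound. This is exactly why the paper works instead with the power weight $\omega(x)=x^{(ps-1)/p}$ on $(0,\infty)$, for which the nonlinear $V$ was explicitly computed in \cite[Lemma~2.4]{FrankSeiringer} to equal $\mathcal D_{1,p,s}\,x^{-ps}$, and then restricts to $(0,1)$ and reflects to create the positive remainder $W_{p,s}$. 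Until you either compute or estimate the nonlinear $V$ for $(1-x^2)^{(ps-1)/p}$ (or switch to the distance weight and the paper's Corollary~\ref{cor:prem}, at the cost of the mismatch between the $\omega$-weight on the remainder and the $\omega^p$-weight on the seminorm, which would need to be handled), the proof is incomplete for $p>2$.
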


Due to the particular form of the exponents this inequality has a scale-invariant form.

\begin{cor}\label{cor:key}
Let $0<s<1$, $q\geq 1$ and $p\geq 2$ with $ps>1$. Then, with the same constant $c=c(p,s,q)<\infty$ as in Lemma~\ref{lem:dim1}, we have for all open sets $\Omega\subsetneq \R$ and all $f\in C_c^\infty(\Omega)$
\begin{equation}\label{eq:aimOmega}
\|f\|_\infty^{p+q(ps-1)} \leq
c \left( \int_\Omega \int_\Omega \frac{|f(x)-f(y)|^p}{|x-y|^{1+ps}}\,dy\,dx -\mathcal D_{1,p,s} \int_\Omega \frac{|f(x)|^p}{d(x)^{ps}}\,dx
\right) 
\|f\|_q^{q(ps-1)}
\end{equation}
where $d(x)=\dist(x,\Omega^c)$.
\end{cor}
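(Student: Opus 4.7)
The plan is to reduce \eqref{eq:aimOmega} to Lemma \ref{lem:dim1} in three steps: an exact rescaling to treat bounded intervals, a truncation argument for half-lines, and a component decomposition for general open $\Omega$.

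I first observe that both sides of \eqref{eq:aimOmega} transform compatibly under the affine map $T$ sending an interval $I=(a,b)$ to $(-1,1)$. Setting $g=f\circ T^{-1}$, one has $\|g\|_\infty=\|f\|_\infty$, $\|g\|_q^{q(ps-1)}=\bigl(\tfrac{2}{b-a}\bigr)^{ps-1}\|f\|_q^{q(ps-1)}$, and, using $1-|T(x)|=2 d_I(x)/(b-a)$, the bracketed expression on $(-1,1)$ for $g$ equals $\bigl(\tfrac{b-a}{2}\bigr)^{ps-1}$ times the bracketed expression on $I$ for $f$. The two factors cancel and Lemma~\ref{lem:dim1} yields \eqref{eq:aimOmega} on every bounded open interval with the same constant $c$. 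Next, for a half-line $I=(a,\infty)$ and $f\in C_c^\infty(I)$ with $\supp f\subset(a,M)$, on any truncation $(a,R)$ with $R>2(M-a)$ one has $d_{(a,R)}(x)=d_I(x)=x-a$ on $\supp f$, so the Hardy terms coincide, while $\iint_{(a,R)\times(a,R)}\le\iint_{I\times I}$; applying the bounded-interval case to $(a,R)$ therefore gives \eqref{eq:aimOmega} on $I$ with the same $c$, and $(-\infty,b)$ is symmetric.

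For general $\Omega\subsetneq\R$, I write $\Omega=\bigsqcup_i I_i$ with $I_i$ its open-interval connected components and $f_i:=f\chi_{I_i}\in C_c^\infty(I_i)$. The key structural observation is that each finite endpoint of $I_i$ lies in $\Omega^c$, since the components are disjoint and open; consequently $d_\Omega(x)=d_{I_i}(x)$ for every $x\in I_i$, and the Hardy integral splits \emph{exactly}:
\begin{equation*}
\int_\Omega \frac{|f(x)|^p}{d_\Omega(x)^{ps}}\,dx \;=\; \sum_i \int_{I_i}\frac{|f_i(x)|^p}{d_{I_i}(x)^{ps}}\,dx.
\end{equation*}
Meanwhile $\iint_{\Omega\times\Omega}\ge\sum_i\iint_{I_i\times I_i}$ by discarding the nonnegative cross terms with $i\ne j$. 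Applying the interval inequality to each $f_i$ and summing bounds the bracketed expression on $\Omega$ below by $c^{-1}\sum_i\|f_i\|_\infty^{p+q(ps-1)}/\|f_i\|_q^{q(ps-1)}$. Choosing $i_0$ with $\|f_{i_0}\|_\infty=\|f\|_\infty$ and using $\|f_{i_0}\|_q\le\|f\|_q$ together with $q(ps-1)>0$ bounds this sum below by $\|f\|_\infty^{p+q(ps-1)}/\|f\|_q^{q(ps-1)}$, which is exactly \eqref{eq:aimOmega}.

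I do not anticipate a serious analytical obstacle in this plan; the only point requiring care throughout is that the constant $c$ must not grow in any of the three reductions. The scale-invariant choice of exponents $p+q(ps-1)$ and $q(ps-1)$ handles this automatically in the rescaling step, and in the component step it relies on the \emph{exact} splitting of the Hardy integral (rather than any nontrivial inequality) together with the elementary sum-versus-max estimate in the last sentence.
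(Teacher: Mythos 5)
Your proof is correct and follows essentially the same route as the paper's (the paper gives only a one-line sketch: translation and dilation for intervals and half-lines, then a ``straightforward'' extension to arbitrary open sets); you have simply filled in the details of the rescaling computation, the truncation for half-lines, and the component decomposition with the elementary sum-versus-max estimate. One small slip: in the half-line step the condition should be $R-a>2(M-a)$ (equivalently $R>2M-a$) rather than $R>2(M-a)$, so that $R-x>x-a$ holds for all $x\in\supp f$; this does not affect the argument.
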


\begin{proof}
From Lemma~\ref{lem:dim1}, by translation and dilation, we obtain (\ref{eq:aimOmega}) for any interval and half-line.
The extension to arbitrary open bounded sets is straightforward.
\end{proof}

We prove Lemma \ref{lem:dim1} in Subsection \ref{sec:key}. Now we show how this corollary allows us to deduce our main theorem. Taking advantage of an averaging formula of Loss and Sloane \cite{LossSloane} the argument is almost the same as in \cite{FLHSM}, but we reproduce it here to make this paper self-contained.

\begin{proof}[Proof of Theorem~\ref{thm:HSM}]
Let $\omega_1$, \ldots, $\omega_N$ be an orthonormal basis in $\R^N$. We write $x_j$ for the $j$-th coordinate of $x\in \R^N$ in this basis, and $\tilde{x}_j = x- x_j\omega_j$.
By skipping the $j$-th coordinate of $\tilde{x}_j$ (which is zero), we may regard $\tilde{x}_j$ as an element of $\R^{N-1}$.
 For a~given domain $\Omega\subsetneq\R^N$
we write
\[
 d_j(x) = d_{\omega_j}(x) = \inf\{ |t| : x+t\omega_j \not\in \Omega \}.
\]
If $u\in C_c^\infty(\Omega)$, then Corollary~\ref{cor:key} yields
\[
 |u(x)| \leq C (g_j(\tilde{x}_j) h_j(\tilde{x}_j) )^{\frac{1}{p+q(ps-1)}}
\]
for any $1\leq j \leq N$, where
\begin{align*}
 g_j(\tilde{x}_j) = & \int_{\tilde x_j +a\omega_j\in \Omega} \! da \int_{\tilde x_j +b\omega_j\in\Omega} \! db \ \frac{ |u(\tilde x_j+a\omega_j)-u(\tilde{x}_j+b\omega_j)|^p }{|a-b|^{1+ps}} \\
 & - \mathcal D_{1,p,s} \int_{\R} da\,  \frac{|u(\tilde x_j + a\omega_j)|^p}{d_j(\tilde x_j + a\omega_j)^{ps}}
\end{align*}
and
\begin{align*}
 h_j(\tilde{x}_j) = \left( \int_{\R} da\, |u(\tilde x_j + a\omega_j)|^q \right)^{ps-1} .
\end{align*}
Thus
\[
  |u(x)|^N \leq C^N \prod_{j=1}^N (g_j(\tilde{x}_j) h_j(\tilde{x}_j) )^{\frac{1}{p+q(ps-1)}} \,.
\]
We now pick $q=\frac{pN}{N-ps}$ and rewrite the previous inequality as
\[
 |u(x)|^q \leq C^q \prod_{j=1}^N (g_j(\tilde{x}_j) h_j(\tilde{x}_j) )^{\frac{1}{ps(N-1)}}.
\]
By a standard argument based on repeated use of H\"older's inequality (see, e.g., \cite[Lemma~2.4]{FLHSM}) we deduce that
\[
 \int_{\R^N} |u(x)|^q\,dx \leq C^q \prod_{j=1}^N 
   \left( \int_{\R^{N-1}}  g_j(y)^{\frac{1}{ps}} h_j(y)^{\frac{1}{ps}}\,dy \right)^{\frac{1}{N-1}} \,.
\]
We note that
\[
   \| h_j^{\frac{1}{ps-1}} \|_{L^1(\R^{N-1})} = \| u \|_{L^q(\R^N)}^q \qquad \text{for every } j=1,\ldots, N
\]
and derive from the H\"older and the arithmetic--geometric mean inequality that
\begin{align*}
 \prod_{j=1}^N  \int_{\R^{N-1}}  g_j(y)^{\frac{1}{ps}} h_j(y)^{\frac{1}{ps}}\,dy 
 &\leq
  \prod_{j=1}^N \|g_j\|_1^{\frac{1}{ps}} \|h_j^{\frac{1}{ps-1}} \|_1^{\frac{ps-1}{ps}} 
  =\| u \|_q^{\frac{q(ps-1)N}{ps}} \prod_{j=1}^N \|g_j\|_1^{\frac{1}{ps}}\\
&\leq \| u \|_q^{\frac{q(ps-1)N}{ps}} \left( N^{-1} \sum_{j=1}^N \|g_j\|_1 \right)^{\frac{N}{ps}}.
\end{align*}
To summarize, we have shown that
\[
 \|u\|_q^p \leq C^{\frac{p^2s(N-1)}{N-ps}} N^{-1} \sum_{j=1}^N \|g_j\|_1.
\]
We now average this inequality over all choices of the coordinate system $\omega_j$. We recall the Loss--Sloane formula \cite[Lemma~2.4]{LossSloane}
\begin{align*}
& \int_{\Omega}\int_{\Omega} \frac{|u(x)-u(y)|^p}{|x-y|^{N+ps} } \,dx\,dy \\
& = \frac12 \int_{\Sph^{N-1}} \!\!d\omega \int_{\{x:\, x\cdot\omega=0\}} \!\!d\mathcal L_\omega(x) \int_{x+a\omega\in\Omega} \!\!da
\int_{x+b\omega\in\Omega} \!\!db \ \frac{|u(x+a\omega)-u(x+b\omega)|^p}{|a-b|^{1+ps}},
\end{align*}
where $\mathcal L_\omega$ is $(N-1)$-dimensional Lebesgue measure on the hyperplane $\{x:\, x\cdot\omega=0\}$. Thus we arrive at
\begin{align*}
\|u\|_q^p \leq \frac{2\, C^{\frac{p^2s(N-1)}{N-ps}}}{|\Sph^{N-1}|}  \bigg(&
\int_{\Omega}\int_{\Omega} \frac{|u(x)-u(y)|^p}{|x-y|^{N+ps} } \,dx\,dy\\
& -  \mathcal D_{1,p,s}\frac{\pi^{\frac{N-1}{2}} \Gamma(\frac{1+ps}{2}) }{\Gamma(\frac{N+ps}{2}) }
 \int_\Omega \frac{|u(x)|^p}{m_{ps}(x)^{ps}} \,dx \bigg) \,.
\end{align*}
Recalling the definition of $\mathcal D_{N,p,s}$ we see that this is the inequality claimed in Theorem \ref{thm:HSM}.
\end{proof}


\subsection{Proof of the key inequality}\label{sec:key}

Our first step towards the proof of Proposition \ref{lem:dim1} is a Hardy inequality on an interval with a remainder term. Note the similarity to Lemma \ref{Hball}.

\begin{lem}\label{lem:prem}
 Let $0<s<1$ and $p\geq 2$ with $ps>1$. Then
\begin{align*}
& \int_0^1 \int_0^1 \frac{|f(x)-f(y)|^p}{|x-y|^{1+ps} } \,dx\,dy - \mathcal D_{1,p,s} \int_0^1 \frac{|f(x)|^p}{x^{ps}}\,dx \\
& \quad \geq c_p \int_0^1 \int_0^1 \frac{|v(x)-v(y)|^p}{|x-y|^{1+ps} } \omega(x)^{p/2}\omega(y)^{p/2} \,dx\,dy
+ \int_0^1 W_{p,s}(x) |v(x)|^p \omega(x)^p\,dx
\end{align*}
for all $f$ with $f(0)=0$ (and no boundary condition at $x=1$). Here $\omega(x)=x^{(ps-1)/p}$ and $f=\omega v$. The function $W_{p,s}$ is bounded away from zero and satisfies
\[
W_{p,s}(x) \approx x^{-(p-1)(ps-1)/p} \qquad\text{for}\ x\in(0,1/2]
\]
and
\[
W_{p,s}(x) \approx
\begin{cases}
 1 & \text{if}\ p-1-ps>0 \,, \\
|\ln(1-x)| & \text{if} \ p-1-ps=0 \,, \\
(1-x)^{-1-ps+p}  & \text{if} \ p-1-ps<0 \,,
\end{cases}
\qquad \text{for}\ x\in [1/2,1).
\]
\end{lem}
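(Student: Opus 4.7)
The proof would mirror Lemma~\ref{Hball} with $L^2$ replaced by $L^p$ and the ball replaced by the interval $(0,1)$. The natural weight is $\omega(x)=x^{(ps-1)/p}$, which is the ground state for the fractional Hardy operator on $(0,\infty)$. First I would apply the $L^p$ ground state representation of Frank--Seiringer \cite{FrSe1} with this weight on $(0,1)$, writing $f=\omega v$, to produce
\[
\iint_{(0,1)^2}\frac{|f(x)-f(y)|^p}{|x-y|^{1+ps}}\,dx\,dy \geq c_p \iint_{(0,1)^2}\frac{|v(x)-v(y)|^p\omega(x)^{p/2}\omega(y)^{p/2}}{|x-y|^{1+ps}}\,dx\,dy + \int_0^1 V^{(0,1)}(x)|f(x)|^p\,dx,
\]
where the nonlocal potential takes the $L^p$-specific form
\[
V^{(0,1)}(x)\,\omega(x)^{p-1} = p\,\mathrm{p.v.}\!\int_0^1\frac{|\omega(x)-\omega(y)|^{p-2}(\omega(x)-\omega(y))}{|x-y|^{1+ps}}\,dy
\]
(which for $p=2$ reduces to $-2L_{(0,1)}\omega(x)$, matching Lemma~\ref{Hball}).

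Next I would extract the sharp Hardy constant. Since $\omega$ is the half-line ground state, applying the same formula on $(0,\infty)$ instead of $(0,1)$ yields $V^{(0,\infty)}(x)=\mathcal{D}_{1,p,s}/x^{ps}$. Defining $W_{p,s}(x):=V^{(0,1)}(x)-\mathcal{D}_{1,p,s}/x^{ps}$, a direct subtraction gives
\[
W_{p,s}(x) = \frac{p}{\omega(x)^{p-1}}\int_1^\infty \frac{(\omega(y)-\omega(x))^{p-1}}{(y-x)^{1+ps}}\,dy,
\]
which is manifestly positive because $\omega$ is strictly increasing. Substituting back into the ground state representation produces exactly the inequality claimed in the lemma.

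The remaining work is to verify the two-sided asymptotics of $W_{p,s}$. For $x\in(0,1/2]$, on $y>1$ we have $y-x\approx y$ and $\omega(y)-\omega(x)\approx y^\gamma$ (with $\gamma=(ps-1)/p$), so the integral is comparable to a positive constant, and the prefactor $\omega(x)^{-(p-1)}=x^{-(p-1)(ps-1)/p}$ yields the first asymptotic. For $x\in[1/2,1)$, setting $\delta=1-x$ and substituting $u=y-1$ reduces the integral to
\[
\int_0^\infty \frac{[(1+u)^{\gamma}-(1-\delta)^{\gamma}]^{p-1}}{(u+\delta)^{1+ps}}\,du.
\]
The tail $u\geq 1$ always contributes $O(1)$. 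On $u\in[0,1]$, the linearization $(1+u)^\gamma-(1-\delta)^\gamma\approx\gamma(u+\delta)$ shows that the integrand is comparable to $(u+\delta)^{p-2-ps}$; the sign of $p-1-ps$ then selects the three regimes: bounded when $p-1-ps>0$, logarithmic $|\ln(1-x)|$ at the critical value $p-1-ps=0$, and divergent of exact order $(1-x)^{p-ps-1}$ when $p-1-ps<0$.

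The hard part will be this boundary analysis near $x=1$: justifying the linearization of $(1+u)^\gamma-(1-\delta)^\gamma$ uniformly in $\delta\to 0^+$ with matching two-sided constants (needed because the lemma asserts $\approx$), and correctly locating the critical threshold $p=ps+1$ where the logarithmic case emerges. Both follow from elementary monotonicity and concavity of $t\mapsto t^\gamma$, but the bookkeeping separating the small-$u$ and large-$u$ contributions must be carried out carefully in each of the three regimes.
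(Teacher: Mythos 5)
Your proposal follows the paper's proof almost verbatim: apply the Frank--Seiringer $L^p$ ground state representation on $(0,1)$ with $\omega(x)=x^{(ps-1)/p}$, extract the sharp Hardy potential by comparing the potential integral over $(0,1)$ with the same integral over $(0,\infty)$ (where it gives exactly $\mathcal D_{1,p,s}/x^{ps}$ by \cite[Lemma~2.4]{FrankSeiringer}), identify $W_{p,s}$ with the resulting positive remainder $\int_1^\infty$, and read off the asymptotics at $0$ and $1$ by the same linearization and case distinction on the sign of $p-1-ps$. One small slip: the prefactor in the Frank--Seiringer potential $V$ is $2$, not $p$ (they agree only when $p=2$); this does not affect the two-sided estimates you state, but it should be corrected.
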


\begin{proof}
 The general ground state representation \cite{FrSe1} reads
\begin{align*}
\int_0^1 \int_0^1 \frac{|f(x)-f(y)|^p}{|x-y|^{1+ps} } \,dx\,dy
&\geq \int_0^1 V(x) |f(x)|^p \\
&+ c_p \int_0^1 \int_0^1 \frac{|v(x)-v(y)|^p}{|x-y|^{1+ps} } \omega(x)^{p/2}\omega(y)^{p/2} \,dx\,dy
\end{align*}
with
$$
V(x) := 2 \omega(x)^{-p+1} \int_0^1 \left(\omega(x) -\omega(y)\right)
\left|\omega(x) - \omega(y) \right|^{p-2} |x-y|^{-1-ps}\,dy
$$
(understood as principal value integral). We decompose
\begin{align*}
V(x) & = 2 \omega(x)^{-p+1} \int_0^\infty \left(\omega(x) -\omega(y)\right)
\left|\omega(x) - \omega(y) \right|^{p-2} |x-y|^{-1-ps}\,dy \\
& \qquad - 2 \omega(x)^{-p+1} \int_1^\infty \left(\omega(x) -\omega(y)\right)
\left|\omega(x) - \omega(y) \right|^{p-2} |x-y|^{-1-ps}\,dy \\
& = \frac{\mathcal D_{1,p,s}}{x^{ps}} + W_{p,s}(x) \,.
\end{align*}
(The computation of the first term is in 
\cite[Lemma 2.4]{FrankSeiringer}.) For $x\in(0,1)$, the second term is positive, indeed,
$$
W_{p,s}(x) = 2 \omega(x)^{-p+1} \int_1^\infty \left(\omega(y) -\omega(x)\right)^{p-1} (y-x)^{-1-ps}\,dy \,.
$$
Note that at $x=0$
$$
 \int_1^\infty \omega(y)^{p-1} y^{-1-ps}\,dy = c_{p,s} <\infty 
$$
since $ps-(p-1)(ps-1)/p>0$. Hence $W_{p,s}(x) \sim 2c_{p,s} x^{-(p-1)(ps-1)/p}$ as $x\to 0$. On the other hand, at $x=1$, we have
$$
\int_1^\infty \left(\omega(y) -1\right)^{p-1} (y-1)^{-1-ps}\,dy =\tilde c_{p,s} <\infty
\qquad \text{if}\ p-1-ps>0 \,.
$$
Hence $W_{p,s}(x) \to 2\tilde c_{p,s}$ as $x\to 1$ in that case. In the opposite case, one easily finds that for $x=1-\epsilon$, to leading order only $y$'s with $y-1$ of order $\epsilon$ contribute. Hence $W_{p,s}(x) \sim 2\tilde c_{p,s} (1-x)^{-1-ps+p}$ as $x\to 1$ if $p-1-ps<0$ and $W_{p,s}(x) \sim 2\tilde c_{p,s} |\ln(1-x)|$ if $p-1-ps=0$. 
\end{proof}

\begin{cor}\label{cor:prem}
 Let $0<s<1$ and $p\geq 2$ with $ps>1$. Then
\begin{align*}
& \int_{-1}^1 \int_{-1}^1 \frac{|f(x)-f(y)|^p}{|x-y|^{1+ps} } \,dx\,dy - \mathcal D_{1,p,s} \int_{-1}^1 \frac{|f(x)|^p}{(1-|x|)^{ps}}\,dx \\
& \qquad \geq c_p \left( \int_{-1}^0 \int_{-1}^0 + \int_0^1\int_0^1\right)
   \frac{|v(x)-v(y)|^p}{|x-y|^{1+ps} } \omega(x)^{p/2}\omega(y)^{p/2} \,dx\,dy\\
&\qquad\quad+ c_{p,s} \int_{-1}^1 |v(x)|^p \omega(x)\,dx
\end{align*}
for all $f$ with $f(-1)=f(1)=0$. Here $\omega(x)=(1-|x|)^{(ps-1)/p}$ and $f=\omega v$.
\end{cor}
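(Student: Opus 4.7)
The plan is to decompose the square $(-1,1)^2$ into the four subrectangles $(-1,0)^2$, $(0,1)^2$, $(-1,0)\times(0,1)$ and $(0,1)\times(-1,0)$, discard the (nonnegative) contributions coming from the two off-diagonal rectangles, and then apply Lemma \ref{lem:prem} on each diagonal half after a rigid change of variables that aligns the vanishing endpoint of $f$ with that demanded by the lemma.

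Concretely, the Gagliardo integral splits as
\[
\iint_{(-1,1)^2} \frac{|f(x)-f(y)|^p}{|x-y|^{1+ps}}\,dx\,dy
= \iint_{(-1,0)^2} + \iint_{(0,1)^2} + 2\iint_{(-1,0)\times(0,1)},
\]
and the last term is $\geq 0$. The Hardy term splits directly as the sum over $(-1,0)$ and $(0,1)$. On $(0,1)$ I would apply Lemma \ref{lem:prem} to $g(x) := f(1-x)$, which satisfies $g(0)=f(1)=0$; pulling back the resulting inequality by the substitution $x\mapsto 1-x$ turns the lemma's Hardy weight $x^{-ps}$ into $(1-x)^{-ps} = (1-|x|)^{-ps}$ and the lemma's weight $\omega(x)=x^{(ps-1)/p}$ into $(1-x)^{(ps-1)/p} = (1-|x|)^{(ps-1)/p}$. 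On $(-1,0)$ I would apply the same lemma to $g(x) := f(x-1)$, exploiting $f(-1)=0$; the analogous pullback produces $(1+x)^{-ps}=(1-|x|)^{-ps}$ and $(1+x)^{(ps-1)/p}=(1-|x|)^{(ps-1)/p}$. Summing the two resulting half-interval inequalities exactly produces, on the right-hand side, the bilinear form $c_p\bigl(\iint_{(-1,0)^2}+\iint_{(0,1)^2}\bigr) |v(x)-v(y)|^p |x-y|^{-1-ps}\omega(x)^{p/2}\omega(y)^{p/2}\,dx\,dy$ claimed in the corollary, together with a remainder of the form $\int_{-1}^1 \widetilde W_{p,s}(x)\,|v(x)|^p\,\omega(x)^p\,dx$, where $\widetilde W_{p,s}$ is the obvious pullback of $W_{p,s}$ to each half.

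It therefore only remains to replace this remainder by the claimed lower bound $c_{p,s}\int_{-1}^1 |v|^p\,\omega\,dx$, which amounts to the pointwise estimate
\[
\widetilde W_{p,s}(x)\,\omega(x)^{p-1} \geq c_{p,s} > 0 \qquad \text{on } (-1,1).
\]
By symmetry it suffices to check this on $(0,1)$ with the reflected variable, i.e.\ to show $W_{p,s}(t)\,t^{(p-1)(ps-1)/p}$ is bounded below on $(0,1)$. This follows from the asymptotics recorded in Lemma \ref{lem:prem}: near the Dirichlet endpoint $t=0$ one has $W_{p,s}(t)\asymp t^{-(p-1)(ps-1)/p}$, so the two factors cancel to give a positive constant; on any interval bounded away from the Dirichlet endpoint the factor $t^{(p-1)(ps-1)/p}$ is bounded below, and $W_{p,s}$ is bounded away from zero by the statement of the lemma.

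The one genuinely non-cosmetic point, which I would treat as the main obstacle, is this last pointwise check: the apparent case split in the behaviour of $W_{p,s}$ near the free endpoint (depending on the sign of $p-1-ps$) must not interfere with the uniform positive lower bound. Fortunately it does not, because in each of the three cases the lemma guarantees $W_{p,s}$ stays bounded below by a positive constant there, so a single constant $c_{p,s}$ works throughout, and absorbing it into the final constant completes the proof.
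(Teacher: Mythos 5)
Your proposal is correct and follows essentially the same route as the paper: apply Lemma~\ref{lem:prem} after reflection/translation to each half-interval (the paper writes $f_1(x)=f(1+x)$, evidently a slip for $f(x-1)$, and $f_2(x)=f(1-x)$), discard the nonnegative cross term of the double integral, and sum. The pointwise check $\widetilde W_{p,s}(x)\,\omega(x)^{p-1}\geq c_{p,s}>0$, which the paper leaves implicit in the phrase ``adding resulting inequalities,'' is exactly what is needed to pass from the lemma's remainder $\int W_{p,s}|v|^p\omega^p$ to the stated $c_{p,s}\int|v|^p\omega$, and you justify it correctly from the recorded asymptotics of $W_{p,s}$ at both endpoints.
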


\begin{proof}
The corollary follows by applying Lemma~\ref{lem:prem} to functions $f_1(x)=f(1+x)$ and $f_2(x)=f(1-x)$, where $x\in [0,1]$,
and adding resulting inequalities.
\end{proof}

The second ingredient besides Lemma \ref{lem:prem} in our proof of Proposition \ref{lem:dim1} is the following bound due to Garsia, Rodemich and Rumsey \cite{GRR}.

\begin{lem}
Let $p,s>0$ with $ps>1$. Then for any continuous function $f$ on $[a,b]$
\begin{equation}\label{eq:key}
\int_a^b \int_a^b \frac{|f(x)-f(y)|^p}{|x-y|^{1+ps}}\,dy\,dx \geq c\ \frac{|f(b)-f(a)|^p}{(b-a)^{ps-1}}
\end{equation}
with $c=(ps-1)^p (8(ps+1))^{-p}/4$.
\end{lem}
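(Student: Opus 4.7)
This is a one-dimensional Garsia--Rodemich--Rumsey inequality specialised to $\Psi(u)=u^p$ and modulus $p(u)=u^{s+1/p}$, and the plan is to carry out the standard iterative chaining argument. By the scaling $x\mapsto a+(b-a)x$, both sides of \eqref{eq:key} rescale identically, so I would reduce to $[a,b]=[0,1]$. Write $B$ for the double integral and
\[
\phi(x) := \int_0^1 \frac{|f(x)-f(y)|^p}{|x-y|^{1+ps}}\,dy, \qquad \int_0^1 \phi(x)\,dx = B,
\]
so that the goal becomes $|f(1)-f(0)|^p \leq B/c$.

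The first ingredient is a Morrey-type pointwise bound: for any $x<y$ in $[0,1]$, Chebyshev's inequality applied on $(x,y)$ to the integrands appearing in $\phi(x)$ and $\phi(y)$ (at thresholds $4\phi(x)/(y-x)$ and $4\phi(y)/(y-x)$ respectively) gives two exceptional sets of measure at most $(y-x)/4$ each, so there exists $z\in(x,y)$ satisfying simultaneously
\[
|f(x)-f(z)|^p \leq 4\phi(x)(y-x)^{ps}, \qquad |f(z)-f(y)|^p \leq 4\phi(y)(y-x)^{ps},
\]
using $|x-z|,|z-y|\leq y-x$. The triangle inequality then yields $|f(x)-f(y)| \leq 2\cdot 4^{1/p}\,\max\{\phi(x),\phi(y)\}^{1/p}\,(y-x)^{s}$.

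The second ingredient is a dyadic selection of good points. For $n\geq 0$ set $J_n := [2^{-n-1},2^{-n}]$; since $\int_{J_n}\phi \leq B$, Chebyshev again furnishes $t_n\in J_n$ with $\phi(t_n)\leq 2^{n+2}B$, and symmetrically points $t'_n\to 1$ with $\phi(t'_n)\leq 2^{n+2}B$. Applying the Morrey step to the consecutive pairs $(t_{n+1},t_n)$, where $t_n-t_{n+1}\leq 2^{-n}$, then gives
\[
|f(t_n)-f(t_{n+1})| \leq C(p,s)\, B^{1/p}\, 2^{-n(s-1/p)},
\]
and the hypothesis $ps>1$ is exactly what makes this series geometrically summable. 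Telescoping and using continuity of $f$ (so that $\lim_n f(t_n)=f(0)$) gives $|f(t_0)-f(0)|\leq C_1(p,s)\,B^{1/p}$; symmetrically $|f(t'_0)-f(1)|\leq C_1(p,s)\,B^{1/p}$, and one further Morrey step handles the middle piece $|f(t_0)-f(t'_0)|$ since $t_0,t'_0\in[1/4,3/4]$ with $\phi(t_0),\phi(t'_0)\leq 4B$. Summing the three pieces and raising to the $p$-th power gives the desired bound.

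The strategy is routine; what requires care is tracking constants to match $c=(ps-1)^p/(4(8(ps+1))^p)$. The factor $(ps-1)^p$ in the numerator arises from the geometric sum $\sum_n 2^{-n(s-1/p)}$, whose common ratio $2^{-(s-1/p)}$ tends to $1$ precisely as $ps\downarrow 1$, while the $(ps+1)^p$ in the denominator traces back to the exponent $s+1/p$ entering through the modulus in the Morrey step. That constant-chasing is the only real obstacle.
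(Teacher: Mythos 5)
Your approach is genuinely different from the paper's: the paper proves this in one line by substituting $\Psi(x)=|x|^p$ and $p(u)=|u|^{s+1/p}$ into Lemma~1.1 of Garsia--Rodemich--Rumsey, whereas you rebuild the chaining argument from scratch (Chebyshev to get a Morrey-type two-point bound, dyadic selection of good points, telescoping). The structure of your argument is sound, and it does prove the inequality with \emph{some} constant $c(p,s)>0$, which is all the paper actually uses downstream. However, the lemma as stated asserts the specific constant $c=(ps-1)^p\bigl(8(ps+1)\bigr)^{-p}/4$, which is exactly what drops out of the GRR bound
\[
|f(1)-f(0)| \le 8\int_0^1 \Psi^{-1}\!\left(\tfrac{4B}{u^2}\right)\,dp(u)
= 8\,(4B)^{1/p}\,\tfrac{s+1/p}{\,s-1/p\,}
= 8\,(4B)^{1/p}\,\tfrac{ps+1}{ps-1},
\]
and your dyadic chaining does not reproduce it. Tracking your factors, each Morrey step costs $2\cdot 4^{1/p}$, the dyadic selection costs $2^{(n+2)/p}$, and the telescoping sum gives $\bigl(1-2^{-(s-1/p)}\bigr)^{-1}$; combining two chains and the middle piece yields a constant of order $2^{O(1/p)}\,p/(ps-1)$ rather than $8\cdot 4^{1/p}(ps+1)/(ps-1)$. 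Already for $p=2$ and $ps$ near $1$ your bound is strictly worse than the claimed one, so the constant-matching you defer is not a formality: the lemma with that explicit $c$ is not established by the proposal, and it cannot be by this route without reworking the thresholds and the dyadic scale to mimic GRR's continuous optimization. A minor slip: $t_0\in[1/2,1]$ and $t'_0\in[0,1/2]$, not $[1/4,3/4]$; this is harmless since you still have $t_0-t'_0\le 1$ and $\phi(t_0),\phi(t'_0)\le 4B$, but the justification given is wrong as written. My recommendation: either cite GRR as the paper does, or state the lemma with an unspecified constant $c(p,s)>0$ if you wish to keep the self-contained chaining proof.
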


\begin{proof}
This follows by taking $\Psi(x)=|x|^p$ and $p(x)=|x|^{s+1/p}$ in \cite[Lemma 1.1]{GRR}.
\end{proof}

After these preliminaries we can now turn to the

\begin{proof}[Proof of Proposition \ref{lem:dim1}]
Let $\omega(x)=(1-|x|)^{(ps-1)/p}$. Substituting $v=f/\omega$ and applying Corollary~\ref{cor:prem}, we see that it suffices to prove
\begin{align}\label{eq:aim2}
& \|v\omega\|_\infty^{p+q(ps-1)} \leq
c\bigg( \int_{-1}^1 |v(x)|^p \omega(x)\,dx \\
&\quad+
 \left( \int_{-1}^0 \int_{-1}^0 + \int_0^1\int_0^1\right)
   \frac{|v(x)-v(y)|^p}{|x-y|^{1+ps} } \omega(x)^{p/2}\omega(y)^{p/2} \,dx\,dy
 \bigg)
\|v\omega\|_q^{q(ps-1)}.\nonumber
\end{align}
Without loss of generality, we may assume that $v$ is non-negative and that for some $x_0\in [0,1)$ we have $v(x_0)\omega(x_0)=\|v\omega\|_\infty > 0$. Let $c_1=\omega(\frac{1}{2})/(2\omega(0)) \in (0,1)$.
We distinguish three cases.

\emph{Case 1:} $x_0\in [0, \frac{1}{2}]$
and $v\omega \geq c_1 v(x_0)\omega(x_0)$ on $[0, \frac{1}{2}]$.
Then $\int_{-1}^1 |v|^p\omega \geq 
\int_{0}^{1/2} |v|^p\omega^p \geq \frac{c_1^p}{2} |v(x_0)\omega(x_0)|^p$ and
$\int_{-1}^1 |v\omega|^q \geq \frac{c_1^q}{2} |v(x_0)\omega(x_0)|^q$,
 hence (\ref{eq:aim2}) follows.

\emph{Case 2:} $x_0\in [0, \frac{1}{2}]$ and there is a $z \in [0, \frac{1}{2}]$ such that $v(z)\omega(z) \leq c_1 v(x_0)\omega(x_0)$.
Let $z$ be closest possible to $x_0$, so that $v(z)\omega(z)= c_1 v(x_0)\omega(x_0)$ and
$v\omega \geq c_1 v(x_0)\omega(x_0)$ on the interval $I$ with endpoints $x_0$ and $z$.
We observe that
\[
 v(z) = c_1 v(x_0) \frac{\omega(x_0)}{\omega(z)} = \frac{v(x_0)}{2} \frac{\omega(x_0)}{\omega(0)} \frac{\omega(\frac{1}{2})}{\omega(z)} \leq \frac{v(x_0)}{2}.
\]
We have  by~(\ref{eq:key})
\begin{align*}
  \int_0^1 \int_0^1 & \frac{|v(x)-v(y)|^p}{|x-y|^{1+ps}}\,\omega(x)^{p/2}\omega(y)^{p/2}\,dy\,dx  \\
 & \geq w({\textstyle \frac{1}{2}})^p \int_I \int_I \frac{|v(x)-v(y)|^p}{|x-y|^{1+ps}}\,dy\,dx \\
 & \geq c |v(x_0)-v(z)|^p |z-x_0|^{1-ps} \geq c' |v(x_0)\omega(x_0)|^p |z-x_0|^{1-ps}.
\end{align*}
On the other hand,
\begin{align*}
 \int_{-1}^1 |v\omega|^q &\geq \int_I |v\omega|^q \geq c_1^q |v(x_0)\omega(x_0)|^q |z-x_0| \,.
\end{align*}
Hence (\ref{eq:aim2}) follows.

\emph{Case 3:}  $x_0 \in (\frac{1}{2},1)$.
Since the function $x\mapsto \omega(x)/\omega(\frac{x}{2})$ is decreasing on $[0,1)$, we have that
\[
 \frac{\omega(x_0)}{\omega(x_0/2)} \leq \frac{\omega(1/2)}{\omega(1/4)} =: c_2.
\]
Since $v(\frac{x_0}{2})\omega(\frac{x_0}{2}) \leq v(x_0)\omega(x_0)$, we get that $v(\frac{x_0}{2}) \leq c_2 v(x_0)$. Hence there exists
$z\in [\frac{x_0}{2}, x_0)$ such that $v(z)=c_2v(x_0)$ and $v \geq c_2v(x_0)$ on $[z, x_0]$.
We have by~(\ref{eq:key})
\begin{align*}
  \int_0^1 \int_0^1 &\frac{|v(x)-v(y)|^p}{|x-y|^{1+ps}}\,\omega(x)^{p/2}\omega(y)^{p/2}\,dy\,dx  \\
 & \geq w(x_0)^p \int_{z}^{x_0} \int_{z}^{x_0} \frac{|v(x)-v(y)|^p}{|x-y|^{1+ps}}\,dy\,dx  \\
 & \geq c w(x_0)^p |v(x_0)-v(z)|^p |z-x_0|^{1-ps} \geq c' |v(x_0)\omega(x_0)|^p |z-x_0|^{1-ps}.
\end{align*}
Also,
\[
 \int_{-1}^1 |v\omega|^q \geq \omega(x_0)^q \int_z^{x_0} |v|^q \geq c_2^q |v(x_0)\omega(x_0)|^q |z-x_0| \,.
\]
and again (\ref{eq:aim2}) follows. This completes the proof of Proposition \ref{lem:dim1}.
\end{proof}


\def\cprime{$'$}

\end{document}